\documentclass{siamart250211}

\usepackage{amsfonts,amsmath,amssymb,bbm}
\usepackage{mathrsfs,mathtools,stmaryrd,wasysym}
\usepackage{xspace,mydef}
\usepackage{esint}
\usepackage{comment}
\usepackage[shortlabels]{enumitem}
\usepackage{tensor}
\DeclareMathAlphabet{\mathpzc}{OT1}{pzc}{m}{it}

\newcommand{\TheTitle}{The spectral fractional Laplacian with measure valued right hand sides: Analysis and Approximation}
\newcommand{\ShortTitle}{Fractional Laplace with Measures}
\newcommand{\TheAuthors}{E.Ot\'arola, A.J.~Salgado}

\headers{\ShortTitle}{\TheAuthors}

\title{\TheTitle}

\author{
  Enrique Ot\'arola\thanks{Departamento de Matem\'atica, Universidad T\'ecnica Federico Santa Mar\'ia, Valpara\'iso, Chile. (\email{enrique.otarola@usm.cl}, \url{http://eotarola.mat.utfsm.cl/})}
\and
  Abner J.~Salgado\thanks{Department of Mathematics, University of Tennessee, Knoxville, TN 37996, USA.
    (\email{asalgad1@utk.edu}, \url{https://math.utk.edu/people/abner-salgado/})}
}

\ifpdf
\hypersetup{
  pdftitle={\TheTitle},
  pdfauthor={\TheAuthors}
}
\fi

\begin{document}

\maketitle

\begin{abstract}
  We consider the spectral definition of the fractional Laplace operator and study a basic linear problem involving this operator and singular forcing. In two dimensions, we introduce an appropriate weak formulation in fractional Sobolev spaces and prove that it is well-posed. As an application of these results, we analyze a pointwise tracking optimal control problem for fractional diffusion. We also develop a finite element scheme for the linear problem using continuous, piecewise linear functions, prove a convergence result in energy norm, and derive an error bound in $L^2(\Omega)$. Finally, we propose a practical scheme based on a diagonalization technique and derive an error bound in $L^2(\Omega)$ using a regularization argument.
\end{abstract}

\begin{keywords}
fractional diffusion, nonlocality, spectral fractional Laplacian, singular forces, Dirac measures, finite elements, convergence, error estimates, regularization.
\end{keywords}

\begin{MSCcodes}
  35A01,   	% Existence problems for PDEs: global existence, local existence, non-existence
  65N12,    % Stability and convergence of numerical methods for boundary value problems involving PDEs
  65N30.    % Finite element, Rayleigh-Ritz and Galerkin methods for boundary value problems involving PDEs
  35R06,   	% PDEs with measure
  35R11,   	% Fractional partial differential equations
\end{MSCcodes}

\begin{center}
  \emph{
    We dedicate this work to the memory of the Prince of Darkness, John Michael ``Ozzy'' Osbourne.
\\
    Thank you for more than five decades of outstanding music!
  }
\end{center}

\section{Introduction}
\label{sec:Into}

The aim of this paper is to study the following boundary value problem for fractional diffusion with a measure-valued right-hand side:
\begin{equation}
\label{eq:TheEqn}
  \Laps u = \mu \text{ in } \Omega,
\end{equation}
where $\Omega \subset \Real^2$ is a bounded, convex polygon, $\Laps$ denotes the fractional Laplace operator in the sense of spectral theory, $s \in (\tfrac12,1)$, and $\mu$ is a Radon measure; see \cref{sec:Notation} for notation.

One motivation for studying \cref{eq:TheEqn} is an optimal control problem with pointwise tracking. Let $\mathcal{D} \subset \Omega$ be a finite set of observable points, $\{ \mathfrak{u}_\vertex \}_{\vertex \in \mathcal{D}} \subset \mathbb{R}$ a set of desired states, and $\alpha >0$ a regularization parameter. We introduce the cost functional
\begin{equation}
\label{eq:J}
 J(\mathfrak{u},q) \coloneqq \frac{1}{2} \sum_{\vertex \in \mathcal{D}} | \mathfrak{u}(\vertex) - \mathfrak{u}_\vertex |^2 + \frac{\alpha}{2} \| q \|^2_{L^2(\Omega)}.
\end{equation}
Given a function $\mathfrak{f}$ and the control bounds $a,b \in \mathbb{R}$, which are such that $-\infty < a < b < \infty$, the pointwise tracking optimal control problem is: Find $\min J(\mathfrak{u},q)$ such that
\begin{equation}
  (-\Delta)^s \mathfrak{u} = \mathfrak{f} + q \text{ in } \Omega,
    \qquad
  q \in \mathbb{Q}_{ad} \coloneqq \left\{ v \in L^2(\Omega): a \leq v(x) \leq b~\mae~x \in \Omega \right\}.
\label{eq:fractional_tracking}
\end{equation}
One of the main difficulties in both the analysis and discretization of this control problem is that the so-called \emph{adjoint problem}, which is essential in the analysis, is a fractional PDE with a \emph{singular} right-hand side, similar to problem \cref{eq:TheEqn}; namely:
\[
 (-\Delta)^s \mathfrak{p} = \sum_{\vertex \in \mathcal{D}} (\mathfrak{u}(\vertex) - \mathfrak{u}_\vertex) \delta_\vertex \text{ in } \Omega.
\]
Here, $\delta_\vertex$ denotes the Dirac delta supported at $\vertex$.

Several papers address the analysis of nonlocal equations with a measure as a right-hand side; see, for example, \cite{MR2779579,MR3217045,MR3339179,MR4026184,MR4510212,MR4505157}. In particular, \cite{MR3339179} considers general nonlinear (possibly degenerate or singular) integro-differential equations. Despite these advances, most available results have been derived for problems involving the \emph{integral} definition of the fractional Laplace operator. To the best of our knowledge, the analysis of problem \cref{eq:TheEqn} is not available in the literature. Regarding discretization, the problem is open for both the \emph{spectral} and \emph{integral} definitions. Therefore, our goal in this work is to develop a suitable weak formulation in fractional Sobolev spaces, analyze this formulation, discretize it using finite elements, and derive convergence results and error bounds.

We organize our presentation as follows. In \cref{sec:Notation} we establish notation, define the spectral fractional Laplacian, and present some of its properties. The analysis of \cref{eq:TheEqn} is carried out in \cref{sec:Analysis}.
In \cref{sec:control} we apply these results to analyze the pointwise tracking optimal control problem \cref{eq:J}--\cref{eq:fractional_tracking}. We establish existence and uniqueness of an
optimal solution and derive optimality conditions. With the results of \cref{sec:Analysis} at hand, \cref{sec:FEM} describes the discrete framework we shall adopt for the numerical approximation of the linear problem \cref{eq:TheEqn}. In particular, we define the discrete Laplacian, its fractional powers, and recall some norms on discrete spaces that arise from it. An ideal scheme, \ie one that fully mimics the continuous framework is developed and analyzed in \cref{sec:DiscreteProblemVerA}, where we show that this method converges at an optimal rate in $L^2(\Omega)$. However, this method is not amenable to practical implementation. For this reason, in \cref{sec:Gordito}, we present a practical scheme and show that it converges, again in $L^2(\Omega)$, with the same rate as our ideal scheme does. Some qualitative numerical illustrations are presented in \cref{sec:NumExp}.

\section{Notation and preliminary remarks}
\label{sec:Notation}

We begin by introducing some relations that we will use in our work. $A \coloneqq B$ denotes equality by definition. $C \eqqcolon D$ stands for $D \coloneqq C$. $A \lesssim B$ means $A \leq c B$ for a nonessential constant $c$ that may change at each occurrence. $A \gtrsim B$ means $B \lesssim A$. Finally, $A \eqsim B$ is the short form for $A \lesssim B \lesssim A$.

Let $\Omega \subset \Real^2$ be a bounded, convex polygon. Throughout the text, we use standard notation for classical Lebesgue and Sobolev spaces. The space of finite Radon measures on $\Omega$ is denoted by $\calM(\Omega)$; see \cite[Definition 1.9]{MR3409135}. The duality pairing between $\calM(\Omega)$ and $C_0(\bar\Omega)$ --- the space of continuous functions in $\bar \Omega$ vanishing on $\partial \Omega$ --- will be denoted by $\langle\cdot,\cdot\rangle$.

With respect to our problem data, we assume that $s \in (\tfrac12,1)$ and that $\mu \in \calM(\Omega)$.

\subsection{The spectral fractional Laplacian}
\label{sub:FracLaplace}

We now briefly describe the construction of the spectral fractional Laplacian \cite{MR2646117,MR2825595,MR2754080}. For further details, we refer the reader to \cite{MR3348172,MR3989717,MR3893441}. 

The eigenvalue problem: Find $(\lambda,\varphi) \in \Real \times H_0^1(\Omega) \setminus \{ 0\}$ such that
\begin{equation}
\label{eq:LapEigenPairs}
  (\nabla \varphi, \nabla v)_{L^2(\Omega)} = \lambda (\varphi,v)_{L^2(\Omega)}
  \quad
  \forall v \in H_0^1(\Omega)
\end{equation}
has a countable collection of solutions $\{(\lambda_k,\varphi_k)\}_{k=1}^\infty \subset \Real^+ \times \Hunz$ such that $\{\varphi_k\}_{k=1}^\infty$ is an orthonormal basis of $L^2(\Omega)$ and an orthogonal basis of $\Hunz$ \cite{MR609148}.

For $r \geq 0$ we define, in terms of the sequence of eigenpairs $\{(\lambda_k,\varphi_k)\}_{k=1}^\infty$,
\begin{equation}
\label{eq:DefOfPolHr}
  \polH^r(\Omega) \coloneqq \left\{ w = \sum_{k=1}^\infty w_k \varphi_k \ : \ \sum_{k=1}^\infty \lambda_k^r |w_k|^2 < \infty \right\},
\end{equation}
with norm
\begin{equation}
  \| w\|_{\polH^r(\Omega)} \coloneqq \left( \sum_{k=1}^\infty \lambda_k^r |w_k|^2 \right)^{\frac{1}{2}}.
  \label{eq:DefofPolHrnorm}
\end{equation}
For $r >0$, $\polH^{-r}(\Omega)$ is the dual space of $\polH^{r}(\Omega)$. The duality pairing between $\mathbb{H}^{-r}(\Omega)$ and $\mathbb{H}^{r}(\Omega)$ is denoted by $_{-r}\langle \cdot, \cdot \rangle_{r}$. With $_{-r}\langle \cdot, \cdot \rangle_{r}$ we can extend the definition of the norm in \cref{eq:DefofPolHrnorm} to negative values of $r$. In fact, through this duality pairing, we can identify an element $f$ of $\mathbb{H}^{-r}(\Omega)$ with a sequence $\{ f_k \}_{k=1}^{\infty}$ such that
\[
 \sum \lambda_k^{-r}f_k^2 \eqqcolon \| f\|^2_{\polH^{-r}(\Omega)} < \infty.
\]

For $s \in (0,1)$ and $w \in C_0^\infty(\Omega)$, we thus define the \emph{spectral fractional Laplacian} as \cite{MR2646117,MR2825595,MR2754080}
\begin{equation}
\label{eq:DeofOfLaps}
  \Laps w \coloneqq \sum_{k=1}^\infty \lambda_k^s w_k \varphi_k, \qquad w_k \coloneqq \int_\Omega w \varphi_k \diff x.
\end{equation}
The operator $\Laps$ can be extended to $\mathbb{H}^s(\Omega)$ by density: $\Laps: \mathbb{H}^s(\Omega) \rightarrow \mathbb{H}^{-s}(\Omega)$. We note that $\Laps$ is an isomorphism between $\polH^s(\Omega)$ and its dual space $\polH^{-s}(\Omega)$.

Given the definition of the spectral fractional Laplacian in terms of $\{(\lambda_k,\varphi_k)\}_{k=1}^\infty$, the spaces $\polH^r(\Omega)$ are natural for problems involving this definition. However, in the following analysis, we will need the relationship between $\polH^r(\Omega)$ and the classical fractional Sobolev spaces. To address this, we note that the spaces $\polH^r(\Omega)$ can also be obtained as intermediate spaces in the sense of \cite[Chapter 1]{MR350177}; namely, we have
\[
  \polH^r(\Omega) = D( (-\Delta)^{r/2} ).
\]
For $r \geq 0$ these spaces are Hilbert when endowed with the norm
\[
  \| w \|_{r} \coloneqq \| (-\Delta)^{r/2} w\|_{L^2(\Omega)} ;
\]
see also \cite[Remark 7.6]{MR350177}. From this, the following characterization of the spaces $\polH^r(\Omega)$ can be derived; see \cite{MR350177,MR1742312,MR2328004,MR216336,MR3343061,MR3356020} for details.

\begin{proposition}[characterization of $\mathbb{H}^r(\Omega)$ for $r \in (0,2)$]
\label{pro:characterization}
We have that 
\[
   \polH^r(\Omega) = \begin{dcases}
                       H^r(\Omega), & r \in \left[ 0,\tfrac12 \right),\\
                       H^{\frac{1}{2}}_{00}(\Omega), & r = \tfrac12, \\
                       H_0^r(\Omega), & r \in \left(\tfrac12,1\right],
                     \end{dcases}
 \]
with equivalent norms. Moreover, if $r \in (1,2)$, then $\polH^r(\Omega) = H_0^1(\Omega) \cap H^r(\Omega)$, with equivalent norms.
\end{proposition}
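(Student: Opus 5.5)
The plan is to identify $\polH^r(\Omega)$ with a complex (equivalently, real) interpolation space between two endpoint spaces that are characterized explicitly, and then to invoke the known interpolation theory for Sobolev spaces on Lipschitz domains. The starting point is the endpoint cases. By construction, $\polH^0(\Omega)=L^2(\Omega)$. Since $\{\varphi_k\}$ is orthogonal in $H_0^1(\Omega)$ with $\|\nabla\varphi_k\|_{L^2(\Omega)}^2=\lambda_k$, we get $\polH^1(\Omega)=H_0^1(\Omega)$ with $\|w\|_{\polH^1(\Omega)}=\|\nabla w\|_{L^2(\Omega)}$. For $r=2$, $\polH^2(\Omega)=D(-\Delta)=\{w\in H_0^1(\Omega):\Delta w\in L^2(\Omega)\}$. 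Because $\Omega$ is a convex polygon, $H^2$-regularity of the Dirichlet Laplacian holds (Grisvard, Kadlec), so $\polH^2(\Omega)=H^2(\Omega)\cap H_0^1(\Omega)$, with $\|w\|_{H^2(\Omega)}\lesssim\|\Delta w\|_{L^2(\Omega)}$.

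The second step is the abstract interpolation identity. The operator $A=-\Delta$ is positive and self-adjoint on $L^2(\Omega)$, so for $0\le \theta\le1$ and $0\le r_0<r_1$ we have $[D(A^{r_0/2}),D(A^{r_1/2})]_\theta=D(A^{((1-\theta)r_0+\theta r_1)/2})$ with equal norms. This follows from \cite[Chapter 1]{MR350177} (Theorem 15.1 and Remark 7.6), or directly by diagonalizing in the basis $\{\varphi_k\}$ and interpolating weighted $\ell^2$ spaces. Consequently, $\polH^r(\Omega)=[L^2(\Omega),H_0^1(\Omega)]_r$ for $r\in(0,1)$, and $\polH^r(\Omega)=[H_0^1(\Omega),H^2(\Omega)\cap H_0^1(\Omega)]_{r-1}$ for $r\in(1,2)$.

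The third step identifies these concrete interpolation spaces.
\begin{itemize}
\item For $r\in(0,1)$, Lions--Magenes \cite{MR350177} (Theorem 11.6 and 11.7 there, valid for Lipschitz or smooth domains; see also \cite{MR2328004} for Lipschitz domains) give $[L^2,H_0^1]_r=H^r(\Omega)$ when $r<\tfrac12$, $H^{1/2}_{00}(\Omega)$ when $r=\tfrac12$, and $H^r_0(\Omega)$ when $r>\tfrac12$, with equivalent norms. The dichotomy comes from the fact that $C_0^\infty(\Omega)$ is dense in $H^r(\Omega)$ exactly when $r\le\tfrac12$, while the trace is well defined for $r>\tfrac12$.
\item For $r\in(1,2)$ we must show $[H_0^1,H^2\cap H_0^1]_\theta=H^{1+\theta}\cap H_0^1$. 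The inclusion ``$\subset$'' follows by interpolating the continuous embeddings into $H^1$ and $H^2$, which gives $H^{1+\theta}(\Omega)$; the $H_0^1$ part follows from the embedding $\polH^r\subset\polH^1$.
\end{itemize}

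The converse inclusion for $r\in(1,2)$ is the main obstacle, since $H^2\cap H_0^1$ is defined by a boundary condition. The approach I would try first is to build a bounded linear extension or retraction. Take $w\in H^{1+\theta}\cap H_0^1$ and set $f=-\Delta w\in H^{\theta-1}(\Omega)$; note that $\theta-1\in(-1,0)$, so this is a genuine dual space. The solution map $f\mapsto(-\Delta)^{-1}f$ is bounded $H^{-1}\to H_0^1$ and $L^2\to H^2\cap H_0^1$, the latter by convexity. Interpolating it gives boundedness from $[H^{-1},L^2]_\theta$ into the interpolation space. One then needs $[H^{-1},L^2]_\theta=H^{\theta-1}(\Omega)$, which follows by duality from the $r<1$ case already established; for $\theta=\tfrac12$ the dual $(H^{1/2}_{00})'$ arises and needs the corresponding care. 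Since $(-\Delta)^{-1}(-\Delta w)=w$, this yields the reverse inclusion with a norm bound. The delicate part is making sure $-\Delta w$ indeed lies in the correct dual space uniformly, in particular when $\theta=\tfrac12$. If this route fails there, I would fall back on the reiteration theorem combined with the characterization of \cite{MR3343061,MR3356020} for polygons.
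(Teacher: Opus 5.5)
The paper gives no argument for this proposition. It only records $\mathbb{H}^r(\Omega)=D((-\Delta)^{r/2})$ and defers to \cite{MR350177,MR1742312,MR2328004,MR216336,MR3343061,MR3356020}. Your proposal reconstructs the standard argument behind those references: spectral interpolation of the scale $\mathbb{H}^r(\Omega)$, the identification of $[L^2(\Omega),H^1_0(\Omega)]_r$ for $r\in(0,1)$, and, for $r\in(1,2)$, $H^2$-regularity on the convex polygon combined with a two-sided inclusion. The route is sound. Compared with the paper's bare citation, it makes visible where convexity is actually used. One small correction: Lions--Magenes treat smooth domains, so for a polygon take the $r\in(0,1)$ identification from the Lipschitz versions \cite{MR1742312,MR3343061}.

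The step you flag as delicate closes uniformly in your $\theta=r-1\in(0,1)$, with nothing special at $\theta=\tfrac12$.
Given $w\in H^{1+\theta}(\Omega)\cap H^1_0(\Omega)$, set $f(v)\coloneqq\int_\Omega\nabla w\cdot\nabla v$ for $v\in H^1_0(\Omega)$.
\begin{itemize}
\item With a universal (Stein) extension operator $E$, $\Delta w=(\Delta Ew)|_\Omega$ lies in the restriction space $H^{\theta-1}(\Omega)$ with norm $\lesssim\|w\|_{H^{1+\theta}(\Omega)}$.
\item That restriction space is the dual of $\widetilde H^{1-\theta}(\Omega)$, the closure of $C_0^\infty(\Omega)$ in $H^{1-\theta}(\mathbb{R}^2)$ \cite{MR1742312}.
\item The first part of the proposition, read as $[L^2(\Omega),H^1_0(\Omega)]_t=\widetilde H^t(\Omega)$, gives $\widetilde H^{1-\theta}(\Omega)=\mathbb{H}^{1-\theta}(\Omega)$. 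Here $\widetilde H^t$ is $H^t$, $H^{1/2}_{00}$, or $H^t_0$ in the three regimes.
\end{itemize}
Hence $|f(v)|\lesssim\|w\|_{H^{1+\theta}(\Omega)}\|v\|_{\mathbb{H}^{1-\theta}(\Omega)}$ for $v\in C_0^\infty(\Omega)$, and by density for all $v\in H^1_0(\Omega)$. At $\theta=\tfrac12$ this is exactly the bound in $(H^{1/2}_{00}(\Omega))'$, with no extra work.

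You also do not need to interpolate the solution operator. Testing the eigenvalue problem with $w$ gives $f(\varphi_k)=\lambda_k w_k$, so $\|w\|_{\mathbb{H}^{1+\theta}(\Omega)}=\|f\|_{\mathbb{H}^{\theta-1}(\Omega)}\lesssim\|w\|_{H^{1+\theta}(\Omega)}$. Thus $H^{1+\theta}(\Omega)\cap H^1_0(\Omega)\hookrightarrow\mathbb{H}^{1+\theta}(\Omega)$ holds on any Lipschitz domain. Convexity is needed only for the inclusion $\mathbb{H}^{1+\theta}(\Omega)\subset H^{1+\theta}(\Omega)$, through $\mathbb{H}^2(\Omega)=H^2(\Omega)\cap H^1_0(\Omega)$.

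Pairing $\nabla w\in H^\theta(\Omega)$ directly against $\nabla v$ works only for $\theta<\tfrac12$. For $\theta\ge\tfrac12$ it would require $\nabla w\in\mathbb{H}^\theta(\Omega)$, that is, vanishing traces. The extension argument avoids this, so your fallback via reiteration is unnecessary.
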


For future reference, it should be mentioned that the inverse of $(-\Delta)^s$ can be given by the so-called Balakrishnan formula; see \cite[eq. (4.4)]{MR3753604} or \cite[Section 10.4]{MR609148}
\begin{equation}
\label{eq:DefOfBalakrishnan}
  (-\Delta)^{-s} = \frac{2\sin(\pi s)}\pi \int_0^\infty t^{1-2s}(t^2 I - \Delta)^{-1} \diff t.
\end{equation}

\section{Analysis of the problem}
\label{sec:Analysis}

We now begin the analysis of problem \cref{eq:TheEqn}. For this purpose, given $s \in (\tfrac12,1)$, we choose $\theta \in (1-s,s)$ and define
\begin{equation}
 \label{eq:mathcalA}
  \calA : \polH^{s-\theta}(\Omega) \times \polH^{s+\theta}(\Omega) \to \Real,
  \qquad
  (v,w) \mapsto \calA(v,w) \coloneqq \sum_{k=1}^\infty \lambda_k^s v_k w_k,
\end{equation}
where
\[
  v = \sum_{k=1}^\infty v_k \varphi_k, \qquad w = \sum_{k=1}^\infty w_k \varphi_k.
\]
It is clear that the parameters $s$ and $\theta$ satisfy the following important inequalities:
\begin{equation}
\label{eq:s_and_theta}
 s - \theta > 0,
 \qquad
 1 < s + \theta < 2s < 2.
\end{equation}

Having defined the form $\mathcal{A}$, we propose the following weak formulation for problem \cref{eq:TheEqn}: Find $u \in \polH^{s-\theta}(\Omega)$ such that
\begin{equation}
\label{eq:WeakFormulation}
\calA(u,v) = \langle \mu, v \rangle, \qquad \forall v \in \polH^{s+\theta}(\Omega).
\end{equation}

The following remark is now in order.
\begin{remark}[on the role of $s$ and $\theta$ in \cref{eq:WeakFormulation}]
\label{rk:role_s_theta}
  Note that $s - \theta > 0$ implies we seek for a solution in a fractional Sobolev space with differentiability index $s - \theta >0$: $\mathbb{H}^{s-\theta}(\Omega)$. Second, since $1< s + \theta < 2s$, we have that $H^{s+\theta-1}(\Omega) \hookrightarrow L^{p}(\Omega)$ for every $p \leq p^\star = 2/(2-s-\theta)$ \cite[Theorem 6.7]{MR2944369} and thus that $H^{s+\theta}(\Omega) \hookrightarrow W^{1,p^\star}(\Omega) \hookrightarrow C(\bar \Omega)$ \cite[Theorem 4.12, Part II]{MR2424078} because $p^\star>2$. Consequently, $\mu$ defines a bounded linear functional on $\mathbb{H}^{s+\theta}(\Omega)$, and the right-hand side of problem \cref{eq:WeakFormulation} is well defined.
\end{remark}

To prove the well posedness of problem \cref{eq:WeakFormulation}, it suffices to show that the conditions of the so-called BNB theorem \cite[Theorem 25.9]{MR4269305} are satisfied.

\begin{theorem}[BNB]
\label{thm:BNB}
  The bilinear form $\calA$ is bounded and satisfies 
  \begin{align}
  \label{eq:inf_sup_A}
    \inf_{v \in \polH^{s-\theta}(\Omega)}\sup_{w \in \polH^{s+\theta}(\Omega)} \frac{ \calA(v,w) }{ \| v \|_{\polH^{s-\theta}(\Omega)} \| w \|_{\polH^{s+\theta}(\Omega)} } & \geq 1 
    \\
    \inf_{w \in \polH^{s+\theta}(\Omega)}\sup_{v \in \polH^{s-\theta}(\Omega)} \frac{ \calA(v,w) }{ \| v \|_{\polH^{s-\theta}(\Omega)} \| w \|_{\polH^{s+\theta}(\Omega)} } & \geq 1.
  \end{align}
  Consequently, for every $\mu \in \calM(\Omega)$, problem \cref{eq:WeakFormulation} has a unique solution $u \in \polH^{s-\theta}(\Omega)$ that satisfies the stability bound
  \begin{equation}
  \label{eq:continuous_stability}
    \| u \|_{\polH^{s-\theta}(\Omega)} \lesssim \| \mu \|_{\calM(\Omega)},
  \end{equation}
  where the implicit constant depends only on $s$, $\theta$, and $\Omega$.
\end{theorem}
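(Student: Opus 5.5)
The plan is to work entirely in the eigenbasis $\{\varphi_k\}_{k=1}^\infty$, where $\calA$ is diagonal. Then verify boundedness, the two inf-sup conditions, and the continuity of the right-hand side, and conclude with \cite[Theorem 25.9]{MR4269305}.

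\emph{Boundedness.} For $v \in \polH^{s-\theta}(\Omega)$ and $w \in \polH^{s+\theta}(\Omega)$, split $\lambda_k^s = \lambda_k^{(s-\theta)/2}\lambda_k^{(s+\theta)/2}$ and apply Cauchy--Schwarz:
\[
  |\calA(v,w)| \leq \Big(\sum_{k} \lambda_k^{s-\theta}|v_k|^2\Big)^{1/2}\Big(\sum_{k}\lambda_k^{s+\theta}|w_k|^2\Big)^{1/2} = \|v\|_{\polH^{s-\theta}(\Omega)}\|w\|_{\polH^{s+\theta}(\Omega)}.
\]
This also shows that the series defining $\calA$ converges absolutely.

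\emph{First inf-sup condition.} Given $v \neq 0$ in $\polH^{s-\theta}(\Omega)$, I would take the explicit test function $w = \sum_k \lambda_k^{-\theta} v_k \varphi_k$, which is $(-\Delta)^{-\theta}v$. Then
\[
  \|w\|_{\polH^{s+\theta}(\Omega)}^2 = \sum_k \lambda_k^{s+\theta}\lambda_k^{-2\theta}|v_k|^2 = \|v\|_{\polH^{s-\theta}(\Omega)}^2,
\]
so $w \in \polH^{s+\theta}(\Omega)$. Moreover $\calA(v,w) = \sum_k \lambda_k^{s-\theta}|v_k|^2 = \|v\|_{\polH^{s-\theta}(\Omega)}\|w\|_{\polH^{s+\theta}(\Omega)}$. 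Hence the supremum is at least $1$.

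\emph{Second inf-sup condition.} Symmetrically, given $w \neq 0$, take $v = \sum_k \lambda_k^{\theta} w_k\varphi_k$. The same computation gives $\|v\|_{\polH^{s-\theta}(\Omega)} = \|w\|_{\polH^{s+\theta}(\Omega)}$ and $\calA(v,w) = \|w\|^2_{\polH^{s+\theta}(\Omega)}$. Equivalently, the second condition amounts to: $\calA(v,w)=0$ for all $v$ forces $w=0$, which one sees by choosing $v=\varphi_k$. The only point needing care here is that these series define elements of the spaces \cref{eq:DefOfPolHr}, and this holds by construction.

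\emph{Right-hand side and conclusion.} The step I expect to need the most care is showing that $v \mapsto \langle \mu, v\rangle$ is a bounded functional on $\polH^{s+\theta}(\Omega)$ with norm $\lesssim \|\mu\|_{\calM(\Omega)}$. By \cref{pro:characterization} and \cref{eq:s_and_theta}, $\polH^{s+\theta}(\Omega) = H_0^1(\Omega)\cap H^{s+\theta}(\Omega)$ with equivalent norms. Following \cref{rk:role_s_theta}, the embedding $H^{s+\theta}(\Omega) \hookrightarrow W^{1,p^\star}(\Omega) \hookrightarrow C(\bar\Omega)$ holds with $p^\star>2$, and the zero trace gives membership in $C_0(\bar\Omega)$. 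Therefore
\[
  |\langle\mu,v\rangle| \leq \|\mu\|_{\calM(\Omega)}\|v\|_{C(\bar\Omega)} \lesssim \|\mu\|_{\calM(\Omega)}\|v\|_{\polH^{s+\theta}(\Omega)},
\]
with a constant depending on $s$, $\theta$, and $\Omega$ through the embedding constants. The BNB theorem then yields existence and uniqueness, together with the bound $\|u\|_{\polH^{s-\theta}(\Omega)} \leq \|\mu\|_{\polH^{-(s+\theta)}(\Omega)} \lesssim \|\mu\|_{\calM(\Omega)}$, which is \cref{eq:continuous_stability}.
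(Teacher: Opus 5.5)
Your proposal is correct and follows essentially the same route as the paper: Cauchy--Schwarz on the diagonal form for boundedness, and the explicit test function $w=\sum_k \lambda_k^{-\theta}v_k\varphi_k$ for the first inf-sup condition. You also write out the second condition, with $v=\sum_k\lambda_k^{\theta}w_k\varphi_k$, which the paper omits; then the embedding $\polH^{s+\theta}(\Omega)\hookrightarrow C(\bar\Omega)$ of \cref{rk:role_s_theta} gives $\|\mu\|_{\polH^{-s-\theta}(\Omega)}\lesssim\|\mu\|_{\calM(\Omega)}$ before invoking the BNB theorem.
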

\begin{proof}
The boundedness of the bilinear form $\calA$ is nothing but an observation. In fact, if $v \in \mathbb{H}^{s-\theta}(\Omega)$ and $w \in \mathbb{H}^{s+\theta}(\Omega)$, then
  \begin{align*}
    |\calA(v,w)| &=
    \left| \sum_{k=1}^\infty \lambda_k^{s} v_k w_k \right|
    =
    \left| \sum_{k=1}^\infty \lambda_k^{(s-\theta)/2} v_k \lambda_k^{(s+\theta)/2} w_k \right|
    \\
      &\leq \left( \sum_{k=1}^\infty \lambda_k^{s-\theta} |v_k|^2 \right)^{\frac{1}{2}} \left( \sum_{k=1}^\infty \lambda_k^{s+\theta} |w_k|^2 \right)^{\frac{1}{2}}
      = \| v \|_{\polH^{s-\theta}(\Omega)} \| w \|_{\polH^{s+\theta}(\Omega)}.
  \end{align*}
  
  We now prove the inf-sup condition
  \begin{equation}
  \label{eq:first_inf_sup_A}
  \inf_{v \in \polH^{s-\theta}(\Omega)}\sup_{w \in \polH^{s+\theta}(\Omega)} \frac{ \calA(v,w) }{ \| v \|_{\polH^{s-\theta}(\Omega)} \| w \|_{\polH^{s+\theta}(\Omega)} } \geq 1.
  \end{equation}
  To do this, we proceed as follows. Given $v = \sum_{k=1}^\infty v_k \varphi_k \in \polH^{s-\theta}(\Omega)$, we define
  \[
  w_k \coloneqq \lambda_k^{-\theta} v_k,
  \qquad
    w_v \coloneqq \sum_{k=1}^\infty w_k \varphi_k.
  \]
  With this definition it follows that $\calA(v,w_v) = \| v \|_{\polH^{s-\theta}(\Omega)}^2$. In fact,
  \[
    \calA(v,w_v) = \sum_{k=1}^\infty \lambda_k^s v_k \left( \lambda_k^{-\theta} v_k \right) = \sum_{k=1}^\infty \lambda_k^{s-\theta} |v_k|^2 = \| v \|_{\polH^{s-\theta}(\Omega)}^2.
  \]
  In addition, we have that
  \[
    \| w_v \|_{\polH^{s+\theta}(\Omega)}^2 = \sum_{k=1}^\infty \lambda_k^{s+\theta} |w_k|^2 = \sum_{k=1}^\infty \lambda_k^{s+\theta} |\lambda_k^{-\theta} v_k|^2 = \sum_{k=1}^\infty \lambda_k^{s-\theta} |v_k|^2 = \| v \|_{\polH^{s-\theta}(\Omega)}^2.
  \]
  As a result, we obtain
  \[
   \sup_{w \in \polH^{s+\theta}(\Omega)} \frac{ \calA(v,w) }{ \| v \|_{\polH^{s-\theta}(\Omega)} \| w \|_{\polH^{s+\theta}(\Omega)} }
   \geq
 \frac{ \calA(v,w_v) }{ \| v \|_{\polH^{s-\theta}(\Omega)} \| w_v \|_{\polH^{s+\theta}(\Omega)} } = 1,
  \]
  which implies the inf-sup condition \cref{eq:first_inf_sup_A}.

  The proof of the inf-sup condition
  \begin{equation}
  \label{eq:second_inf_sup_A}
    \inf_{w \in \polH^{s+\theta}(\Omega)}\sup_{v \in \polH^{s-\theta}(\Omega)} \frac{ \calA(v,w) }{ \| v \|_{\polH^{s-\theta}(\Omega)} \| w \|_{\polH^{s+\theta}(\Omega)} } \geq 1
  \end{equation}
  is similar and it is omitted for the sake of brevity.
  
  With the inf-sup conditions \cref{eq:first_inf_sup_A} and \cref{eq:second_inf_sup_A} established, we have shown that $\calA$ satisfies all the conditions of the BNB theorem \cite[Theorem 25.9]{MR4269305}.

  Finally, since $1< s + \theta < 2s$, we have that
  \[
   \| \mu \|_{\mathbb{H}^{-s-\theta}(\Omega)} = \sup_{w \in \polH^{s+\theta}(\Omega)} \frac{\langle \mu, w \rangle}{ \| w \|_{\polH^{s+\theta}(\Omega)}} 
   \leq 
   \| \mu \|_{\mathcal{M}(\Omega)} \sup_{w \in \polH^{s+\theta}(\Omega)} \frac{ \| w \|_{C(\bar \Omega)}}{ \| w \|_{\polH^{s+\theta}(\Omega)}} 
   \lesssim \| \mu \|_{\mathcal{M}(\Omega)},
  \]
  where we have used that $\| w \|_{C(\bar \Omega)} \leq C \| w \|_{\polH^{s+\theta}(\Omega)}$ with a constant $C$ that depends on $s$, $\theta$, and $\Omega$; see \cref{rk:role_s_theta} for details.

  The desired result follows from a direct application of the BNB theorem \cite[Theorem 25.9]{MR4269305}, \cite[Theorem 2.2]{NV}. This concludes the proof.
\end{proof}

\begin{remark}[scaling]
\label{rem:Delta} 
To gain intuition, we consider the case of $\mu = \delta_{\vertex}$, the Dirac measure supported at the point $\vertex \in \Omega$. In this case, as shown in \cite[Theorem 2.7]{MR3489634}, the fundamental solution (Green's function) behaves as
  \[
    \calG(x,\vertex) \eqsim |x-\vertex|^{-2+2s}, \qquad x,\vertex \in \Omega, \quad x \neq \vertex.
  \]
  We note that, provided $p < \tfrac1{1-s}$, we have
  \[
    \int_\Omega |\calG(x,\vertex)|^p \diff x \eqsim \int_0^{\diam \Omega} r^{p(-2+2s)} r\diff r < \infty .
  \]
  Let now $\bfbeta$ be a multi-index. Then,
  \[
    \partial^\bfbeta_x \calG(x,\vertex) \eqsim |x - \vertex |^{-2+2s - |\bfbeta|}, \qquad x,\vertex \in \Omega, \quad x \neq \vertex.
  \]
  This implies that, provided $|\bfbeta| < 2s-1$, we have
  \[
    \int_\Omega |\partial_x^\bfbeta \calG(x,\vertex)|^2 \diff x \eqsim \int_0^{\diam\Omega} r^{2(-2+2s-|\bfbeta|)} r \diff r < \infty,
  \]
  \ie $\partial^\bfbeta_x \calG \in \Ldeux$. Thus, we may want to set, at least formally, $|\bfbeta| = s-\theta$ so that $\calG(\cdot,\vertex) \in \polH^{s-\theta}(\Omega)$. However, for this to be possible, we must require
  \[
    s-\theta < 2s-1 \qquad \implies \qquad s+\theta > 1.
  \]
  This is consistent with the assumptions we have imposed on $s$ and $\theta$; see the inequalities in \cref{eq:s_and_theta} and \cref{rk:role_s_theta}.
\end{remark}

\section{The pointwise tracking optimal control problem}
\label{sec:control}

We now apply the results we have obtained so far, and precisely describe the pointwise tracking optimal control problem introduced in \cref{sec:Into}. We establish existence and uniqueness of an optimal solution and derive first-order necessary and sufficient optimality conditions.

We recall that $\Omega \subset \Real^2$ is a bounded, convex polygon, and that $s \in (\tfrac12,1)$. We begin our analysis by introducing the bilinear form
\begin{equation}
 \label{eq:mathcalB}
  \calB : \polH^{s}(\Omega) \times \polH^{s}(\Omega) \to \Real,
  \quad
  (v,w) \mapsto \calB(v,w) \coloneqq \tensor[_{-s}]{\langle \Laps v, w \rangle}{_{s}} = \sum_{k=1}^\infty \lambda_k^s v_k w_k.
\end{equation}
Given $\mathfrak{f} \in L^2(\Omega)$, we formulate the pointwise tracking optimal control problem as follows: Find
\begin{equation}
\label{eq:min}
  \min \left\{ J(\mathfrak{u},q) \ : \ (\mathfrak{u},q) \in \mathbb{H}^s(\Omega) \times \mathbb{Q}_{ad} \right\},
\end{equation}
subject to the state equation
\begin{equation}
\label{eq:state_equation}
  \mathfrak{u} \in \mathbb{H}^s(\Omega):
  \quad
  \mathcal{B}(\mathfrak{u},v) = \int_\Omega \left( \mathfrak{f} + q \right)  v \diff{x}
  \quad
  \forall v \in \mathbb{H}^s(\Omega).
\end{equation}
We recall that the functional $J$ is defined in \cref{eq:J} and the set $\mathbb{Q}_{ad} \subset L^2(\Omega)$ is defined in \cref{eq:fractional_tracking}. The control bounds $a,b \in \Real$ verify $-\infty < a < b < \infty$. A straightforward application of the Lax-Milgram lemma shows that, for every $q \in \polQ_{ad}$, there is a unique solution $\mathfrak{u} \in \mathbb{H}^{s}(\Omega)$ to problem \eqref{eq:state_equation}. In addition, since $\mathfrak{f} + q \in L^2(\Omega)$, it follows directly from \cref{eq:DefOfPolHr} that $\mathfrak{u} \in \mathbb{H}^{2s}(\Omega)$. Therefore, by invoking \cref{pro:characterization}, the fact that $s>\tfrac{1}{2}$, and the Sobolev embedding $H^{2s}(\Omega) \hookrightarrow C(\bar \Omega)$; see \cite[Theorem 6.7]{MR2944369} and \cite[Theorem 4.12, \textbf{Part II}]{MR2424078}, we deduce that point evaluations of $\mathfrak{u}$ are well-defined and, consequently, so is the cost functional.

The next ingredient in the analysis is to introduce the \emph{control-to-state map}
\[
  \mathcal{S}: L^2(\Omega) \rightarrow \mathbb{H}^s(\Omega) \cap C(\bar \Omega),
  \qquad
  q \mapsto \fraku \coloneqq \calS q,
\]
which maps $q \in L^2(\Omega)$ to the unique $\mathfrak{u}\in \mathbb{H}^s(\Omega) \cap C(\bar \Omega)$ that solves \cref{eq:state_equation}. We emphasize that, since $s > \frac12$, the Sobolev embedding $H^{2s}(\Omega) \hookrightarrow C(\bar \Omega)$ guarantees that $\mathcal{S}$ is well-defined. Moreover, $\mathcal{S}$ is an affine and continuous map; specifically,
\begin{equation}
\label{eq:u_is_continuous}
  \| \mathfrak{u} \|_{\mathbb{H}^s(\Omega)}
  +
  \| \mathfrak{u} \|_{C(\bar \Omega)}
  \lesssim
  \| \mathfrak{f} \|_{L^2(\Omega)}
  +
  \| q \|_{L^2(\Omega)}.
\end{equation}
With this operator at hand, we define the \emph{reduced cost functional}
\begin{equation}
\label{eq:reduced_functional}
j: \mathbb{Q}_{ad} \rightarrow \Real,
\quad
q \mapsto j(q) \coloneqq J(\mathcal{S}q,q) = \frac{1}{2} \sum_{\vertex \in \mathcal{D}} | \mathcal{S}q(\vertex) - \mathfrak{u}_\vertex |^2 + \frac{\alpha}{2} \| q \|^2_{L^2(\Omega)}.
\end{equation}

Let us now show the existence and uniqueness of an optimal control.

\begin{theorem}[existence and uniqueness]
  The pointwise tracking optimal control problem \cref{eq:min}--\cref{eq:state_equation} has a unique solution $(\bar{\mathfrak{u}},\bar{q}) \in \mathbb{H}^s(\Omega) \times \mathbb{Q}_{ad}$.
\end{theorem}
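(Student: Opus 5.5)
We reduce the problem to minimizing the reduced functional $j$ from \cref{eq:reduced_functional} over $\mathbb{Q}_{ad}$. The plan is the direct method of the calculus of variations combined with strict convexity. Existence will come from weak compactness of $\mathbb{Q}_{ad}$ and weak lower semicontinuity of $j$; uniqueness will come from strict convexity of $j$.

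\emph{Step 1 (structure of $\mathbb{Q}_{ad}$).} Since $-\infty<a<b<\infty$, the set $\mathbb{Q}_{ad}$ is nonempty, convex, bounded, and closed in $L^2(\Omega)$. Closedness follows because $L^2$-convergence implies a.e.\ convergence along a subsequence. Being convex and closed, it is weakly closed, so it is weakly sequentially compact in the Hilbert space $L^2(\Omega)$.

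\emph{Step 2 (minimizing sequence).} Since $j\ge 0$, the infimum $\mathfrak{j}=\inf_{\mathbb{Q}_{ad}} j$ is finite. We take a minimizing sequence $\{q_n\}\subset\mathbb{Q}_{ad}$ and extract a subsequence with $q_n\rightharpoonup\bar q\in\mathbb{Q}_{ad}$ in $L^2(\Omega)$.

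\emph{Step 3 (passing to the limit in the tracking term).} This is the main obstacle, because point evaluations are not weakly continuous on $L^2(\Omega)$ itself; we need strong convergence of the states in $C(\bar\Omega)$.
\begin{itemize}
\item Write $\mathcal{S}q=\mathcal{S}0+\mathcal{L}q$, where $\mathcal{L}=(-\Delta)^{-s}$ is linear.
\item $\mathcal{L}$ maps $L^2(\Omega)$ boundedly into $\mathbb{H}^{2s}(\Omega)$.
\item $\mathbb{H}^{2s}(\Omega)$ embeds into $H^{2s}(\Omega)$ by \cref{pro:characterization}. For $2s\in(1,2)$ we then have $H^{2s}(\Omega)\hookrightarrow C(\bar\Omega)$ as noted before \cref{eq:u_is_continuous}.
\item Pick $r\in(1,2s)$. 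The embedding $H^{2s}(\Omega)\hookrightarrow H^{r}(\Omega)$ is compact (Rellich on the bounded Lipschitz domain $\Omega$), and $H^{r}(\Omega)\hookrightarrow C(\bar\Omega)$ since $r>1=d/2$.
\item Hence $\mathcal{L}:L^2(\Omega)\to C(\bar\Omega)$ is compact and maps weakly convergent sequences to strongly convergent ones.
\end{itemize}
Alternatively, one can avoid compactness: for fixed $\vertex$, the functional $q\mapsto\mathcal{L}q(\vertex)$ is linear and bounded on $L^2(\Omega)$ by \cref{eq:u_is_continuous}. It is therefore weakly continuous, which already suffices because $\mathcal{D}$ is finite. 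Either way, $\mathcal{S}q_n(\vertex)\to\mathcal{S}\bar q(\vertex)$ for each $\vertex\in\mathcal{D}$, so the tracking term converges.

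\emph{Step 4 (lower semicontinuity and existence).} The term $\tfrac{\alpha}{2}\|q\|^2_{L^2(\Omega)}$ is weakly lower semicontinuous. Combining this with Step 3 gives $j(\bar q)\le\liminf j(q_n)=\mathfrak{j}$, so $\bar q$ is optimal, and $\bar{\mathfrak{u}}=\mathcal{S}\bar q$.

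\emph{Step 5 (uniqueness).} The map $q\mapsto\mathcal{S}q(\vertex)$ is affine, so $q\mapsto|\mathcal{S}q(\vertex)-\mathfrak{u}_\vertex|^2$ is convex. Since $\alpha>0$, the term $\tfrac{\alpha}{2}\|q\|^2_{L^2(\Omega)}$ is strictly convex. Thus $j$ is strictly convex on the convex set $\mathbb{Q}_{ad}$, and two distinct minimizers are impossible: their midpoint would give a strictly smaller value. Uniqueness of the state $\bar{\mathfrak{u}}$ then follows from the unique solvability of \cref{eq:state_equation}.
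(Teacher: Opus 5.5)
Your proposal is correct and takes the same route as the paper: the direct method on the weakly sequentially compact set $\mathbb{Q}_{ad}$, passing to the limit in the tracking term via the compact embedding $H^{2s}(\Omega) \hookrightarrow C(\bar\Omega)$, and uniqueness from the strict convexity of $j$ (since $\alpha > 0$). Your alternative observation is also valid and slightly sharper: the maps $q \mapsto \mathcal{S}q(x)$, $x \in \mathcal{D}$, are continuous and affine on $L^2(\Omega)$, hence weakly continuous, so compactness is not actually needed (just as the paper's remark that a continuous convex $j$ is weakly lower semicontinuous already suffices).
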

\begin{proof}
  The set $\mathbb{Q}_{ad} \subset L^2(\Omega)$ is nonempty, closed, bounded, and convex. Therefore it is weakly sequentially compact in $L^2(\Omega)$. Next we observe that the reduced cost functional $j$ is continuous and, since $\alpha >0$, it is strictly convex. Consequently, $j$ is weakly lower semicontinuous. We now note that the Sobolev embedding $H^{2s}(\Omega) \hookrightarrow C(\bar \Omega)$ is not only continuous but also compact \cite[Corollary 7.2]{MR2944369}. The assertion thus follows by applying the direct method of the calculus of variations, as in the proof of \cite[Theorem 2.14]{MR2583281}. Uniqueness follows from the strict convexity of $j$.
\end{proof}

The following result is classical \cite[Lemma 2.21]{MR2583281}: the control $\bar{q} \in \mathbb{Q}_{ad}$ is optimal for problem \cref{eq:min}--\cref{eq:state_equation} if and only if
\begin{equation}
\label{eq:VI}
  j'(\bar{q})(q - \bar q) \geq 0 \qquad \forall q \in \mathbb{Q}_{ad}.
\end{equation}
To explore this variational inequality and to obtain first-order optimality conditions, we introduce the so-called \emph{adjoint problem}: Find $\mathfrak{p} \in \mathbb{H}^{s-\theta}(\Omega)$ such that
\begin{equation}
\label{eq:adjoint_equation}
 \mathcal{A}(\mathfrak{p},v) = \sum_{\vertex \in \mathcal{D}} \left\langle (\mathfrak{u}(\vertex) - \mathfrak{u}_\vertex) \delta_\vertex, v \right \rangle
 \quad
 \forall v \in \mathbb{H}^{s+\theta}(\Omega).
\end{equation}
Here, as before, $\theta \in (1-s,s)$. An immediate application of \cref{thm:BNB} yields the existence and uniqueness of the \emph{adjoint state} $\mathfrak{p}$, along with its stability bound:
\begin{equation}
\label{eq:p_continuous_stability}
  \| \mathfrak{p} \|_{\polH^{s-\theta}(\Omega)} \lesssim \sum_{\vertex \in \mathcal{D}} \| \mathfrak{u}(\vertex) - \mathfrak{u}_\vertex \|_{C(\bar \Omega)} \lesssim \| \mathfrak{f} \|_{L^2(\Omega)} + \| q \|_{L^2(\Omega)} + \sum_{\vertex \in \mathcal{D}} |\mathfrak{u}_\vertex|,
\end{equation}
where we have used the stability bound \cref{eq:u_is_continuous}.

We are now in a position to show first-order optimality conditions for our problem.

\begin{theorem}[optimality conditions]
 The control $\bar{q} \in \mathbb{Q}_{ad}$ is optimal for the pointwise tracking optimal control problem \cref{eq:min}---\cref{eq:state_equation} if and only if
 \begin{equation}
  \label{eq:first_order_optimality_condition}
   \int_\Omega \left( \bar{\mathfrak{p}} + \alpha \bar{q}\right) \left( q - \bar{q} \right) \diff{x}  \geq 0 \qquad \forall q \in \mathbb{Q}_{ad}.
 \end{equation}
  Here, $\bar{\mathfrak{p}} \in \polH^{s-\theta}(\Omega)$ is the unique solution to \cref{eq:adjoint_equation} with $\bar \fraku = \calS \bar{q}$.
\end{theorem}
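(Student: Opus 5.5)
We reduce the statement to the classical variational inequality \cref{eq:VI}, which characterizes optimality because $j$ is convex and $\polQ_{ad}$ is convex. It then suffices to show that, for every $q \in \polQ_{ad}$,
\[
  j'(\bar q)(q-\bar q) = \int_\Omega (\bar{\mathfrak{p}} + \alpha \bar q)(q - \bar q) \diff x .
\]

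\textbf{Step 1: the derivative of $j$.} Write $\delta q \coloneqq q - \bar q$. Since $\calS$ is affine and continuous from $L^2(\Omega)$ into $\polH^s(\Omega) \cap C(\bar\Omega)$ by \cref{eq:u_is_continuous}, its derivative is the linear map $\delta q \mapsto z \coloneqq \calS\delta q - \calS 0$. This $z \in \polH^{2s}(\Omega)$ solves $\calB(z,v) = \int_\Omega \delta q\, v \diff x$ for all $v \in \polH^s(\Omega)$; in spectral coefficients, $z_k = \lambda_k^{-s}(\delta q)_k$. Point evaluation is continuous on $C(\bar\Omega)$, so the chain rule gives
\[
  j'(\bar q)\delta q = \sum_{\vertex\in\calD} (\bar\fraku(\vertex) - \fraku_\vertex)\, z(\vertex) + \alpha \int_\Omega \bar q\, \delta q \diff x .
\]

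\textbf{Step 2: identify the tracking term through the adjoint.} This is the key step and the main obstacle, because $z$ must be an admissible test function in \cref{eq:adjoint_equation}, which requires $z \in \polH^{s+\theta}(\Omega)$. Since $\delta q \in L^2(\Omega)$, we have
\[
  \sum_k \lambda_k^{2s}|z_k|^2 = \|\delta q\|^2_{L^2(\Omega)} < \infty ,
\]
so $z \in \polH^{2s}(\Omega) \hookrightarrow \polH^{s+\theta}(\Omega)$, using $\theta < s$ and $\lambda_k \ge \lambda_1 > 0$. Testing \cref{eq:adjoint_equation} with $v = z$ yields
\[
  \sum_{\vertex} (\bar\fraku(\vertex)-\fraku_\vertex)\, z(\vertex) = \calA(\bar{\mathfrak{p}}, z) = \sum_k \lambda_k^s \bar{\mathfrak{p}}_k z_k .
\]

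\textbf{Step 3: conclude.} Substituting $z_k = \lambda_k^{-s}(\delta q)_k$ gives
\[
  \calA(\bar{\mathfrak{p}}, z) = \sum_k \bar{\mathfrak{p}}_k (\delta q)_k = \int_\Omega \bar{\mathfrak{p}}\, \delta q \diff x .
\]
This is legitimate because $\bar{\mathfrak{p}} \in \polH^{s-\theta}(\Omega) \subset L^2(\Omega)$ (as $s - \theta > 0$), so Parseval applies and the series converges absolutely. Combining Steps 1–3 gives
\[
  j'(\bar q)(q-\bar q) = \int_\Omega (\bar{\mathfrak{p}} + \alpha\bar q)(q-\bar q)\diff x ,
\]
and \cref{eq:VI} becomes exactly \cref{eq:first_order_optimality_condition}. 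This proves both directions of the equivalence.
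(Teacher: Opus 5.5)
Your proof is correct. Its skeleton matches the paper's: reduce to \cref{eq:VI}, differentiate $j$ along the affine map $\calS$, observe that $z = \calS q - \calS\bar q \in \polH^{2s}(\Omega) \hookrightarrow \polH^{s+\theta}(\Omega)$, and test \cref{eq:adjoint_equation} with $z$.

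The difference is in how you obtain $\calA(\bar{\mathfrak{p}}, z) = \int_\Omega \bar{\mathfrak{p}}\,(q-\bar q) \diff x$. The paper singles this out as the delicate step, because $\bar{\mathfrak{p}} \notin \polH^{s}(\Omega)$ is not an admissible test function in the state equation. It argues by density: it takes $p_n \in C_0^\infty(\Omega)$ with $p_n \to \bar{\mathfrak{p}}$ in $\polH^{s-\theta}(\Omega)$, tests the equation for $z$ with $p_n$, rewrites $\calB(z,p_n) = \calA(p_n,z)$, and passes to the limit using the continuity of $\calA$ and of the $L^2(\Omega)$ pairing. You instead use the explicit coefficients $z_k = \lambda_k^{-s}(q-\bar q)_k$. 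The weight $\lambda_k^s$ in $\calA$ then cancels, and Parseval gives the identity at once; this is legitimate because $\polH^{s-\theta}(\Omega) \subset L^2(\Omega)$.

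Your version is shorter and needs no density result. It also handles the affine structure of $\calS$ more carefully than \cref{eq:VI2}, where $\calS(q-\bar q)$ must be read as $\calS q - \calS \bar q$. The paper's version is the standard transposition argument: it uses only that $\calA$ continuously extends $\calB$ in its first argument, not a closed form for $z$. That is why it carries over to operators without an explicit diagonalization. Choosing $p_n$ as the truncated eigenfunction expansion of $\bar{\mathfrak{p}}$ turns the paper's argument into yours.
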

\begin{proof}
  From \cref{eq:VI} we immediately deduce that, for every $q \in \polQ_{ad}$,
  \begin{equation}
  \label{eq:VI2}
    0 \leq j'(\bar{q})(q - \bar q) = \sum_{\vertex \in \mathcal{D}} (\mathcal{S}\bar{q}(\vertex) - \mathfrak{u}_\vertex) \mathcal{S} (q - \bar q)(\vertex)  + \alpha \int_\Omega \bar{q} ( q -\bar{q}) \diff{x}.
  \end{equation}
  The second term on the right hand side of the previous expression is already present in \cref{eq:first_order_optimality_condition}, so we focus on the first term. Let $q \in \mathbb{Q}_{ad}$ and set $\mathfrak{u} = \mathcal{S} q$ and $\bar{\mathfrak{u}} = \mathcal{S} \bar{q}$.
  From \cref{eq:DefOfPolHr} we deduce that $\mathfrak{u} , \bar{\mathfrak{u}} \in \mathbb{H}^{2s}(\Omega)$. Next, since $s+\theta < 2s$, we have $\mathbb{H}^{2s}(\Omega) \hookrightarrow \mathbb{H}^{s + \theta}(\Omega)$. As a result, $v = \mathfrak{u} - \bar{\mathfrak{u}} \in \polH^{s+\theta}(\Omega)$ is an admissible test function in \cref{eq:adjoint_equation}. Thus,
  \begin{equation}
  \label{eq:first_order_aux_1}
    \mathcal{A}(\bar{\mathfrak{p}}, \mathfrak{u} - \bar{\mathfrak{u}}) = \sum_{\vertex \in \mathcal{D}} \left\langle (\bar{\mathfrak{u}}(\vertex) - \mathfrak{u}_\vertex) \delta_\vertex, \mathfrak{u} - \bar{\mathfrak{u}} \right \rangle
    =  \sum_{\vertex \in \mathcal{D}} (\bar{\mathfrak{u}}(\vertex) - \mathfrak{u}_\vertex) (\mathfrak{u}(\vertex) - \bar{\mathfrak{u}}(\vertex)).
  \end{equation}
  On the other hand, we would like to set $v = \mathfrak{p} \in \mathbb{H}^{s-\theta}(\Omega)$ in the problem that $\mathfrak{u} - \bar{\mathfrak{u}}$ solves. If that were possible, we would obtain
  \begin{equation}
    \calB(\fraku - \bar{\fraku}, \bar{\frakp}) =  \int_\Omega \left( q - \bar{q} \right) \bar{\mathfrak{p}} \diff{x}.
  \label{eq:first_order_aux_2}
  \end{equation}
  However, $\mathfrak{p} \in \mathbb{H}^{s-\theta}(\Omega) \setminus \mathbb{H}^{s}(\Omega)$, so  \cref{eq:first_order_aux_2} must be justified with a different argument. To achieve this, we let $\{ p_n \}_{n \in \mathbb{N}} \subset C_0^{\infty}(\Omega)$ be such that $p_n \rightarrow \bar{\mathfrak{p}}$ in $\mathbb{H}^{s-\theta}(\Omega)$. Since, for every $n \in \polN$, we have $p_n \in \mathbb{H}^s(\Omega)$, we can set $v = p_n$ in the problem that $\mathfrak{u} - \bar{\mathfrak{u}}$ solves to obtain that
  \[
   \mathcal{B}(\mathfrak{u} - \bar{\mathfrak{u}}, p_n) =  \int_\Omega \left(q - \bar{q}\right) p_n \diff{x} \quad \forall n \in \mathbb{N}.
  \]
  Since $\mathfrak{u} - \bar{\mathfrak{u}} \in \mathbb{H}^{2s}(\Omega) \hookrightarrow \mathbb{H}^{s + \theta}(\Omega)$, we have that
  \[
    \calB(\fraku - \bar{\fraku}, p_n ) = \calA( p_n, \fraku - \bar{\fraku} ).
  \]
  We now invoke the continuity of $\mathcal{A}$ to obtain
  \[
    \calA( \bar{\mathfrak{p}}, \fraku - \bar{\fraku} ) = \int_\Omega \left(q - \bar{q}\right) \bar{\mathfrak{p}} \diff{x}.
  \]
  From this and identity \cref{eq:first_order_aux_1}, we conclude that
  \[
   \int_\Omega \left(q - \bar{q}\right) \bar{\mathfrak{p}} \diff{x}
   =
   \sum_{\vertex \in \mathcal{D}} (\bar{\mathfrak{u}}(\vertex) - \mathfrak{u}_\vertex) (\mathfrak{u}(\vertex) - \bar{\mathfrak{u}}(\vertex)).
  \]
  The desired variational inequality then follows from \cref{eq:VI2}.
\end{proof}

Define the projection operator
\[
  \Pi_{[a,b]} : L^1(\Omega) \to \mathbb{Q}_{ad}, \qquad \Pi_{[a,b]}(v) = \min\{b,\max\{ a,v\}\}.
\] 
Following \cite[Section 2.8.2]{MR2583281} we derive that $\bar q$ solves \cref{eq:first_order_optimality_condition} if and only if
\[
 \bar{q} = \Pi_{[a,b]}\left(-\alpha^{-1} \bar{\mathfrak{p}} \right), \quad \mae \text{ in } \Omega.
\]
An immediate application of \cite[Theorem 1]{MR1173747} shows that $\bar{q} \in \mathbb{H}^{s-\theta}(\Omega)$.

\section{Discretization}
\label{sec:FEM}

We will perform the discretization using finite elements. Since $\Omega$ is a polygon, it can be meshed exactly. We thus introduce $\polT = \{\Triang\}_{h>0}$, a quasiuniform family of conforming triangular meshes of $\bar\Omega$. The parameter $h>0$ denotes the mesh size of $\Triang$. By $\Fespace$, we denote the space of continuous functions that are piecewise linear with respect to the mesh $\Triang$ and vanish on $\partial\Omega$. We note that $\Fespace$ satisfies 
$\Fespace \subset \polH^r(\Omega)$ for all $r \in [0,\tfrac{3}{2})$. Given $h>0$, we denote
\[
  N_h = \dim\Fespace.
\]
We also introduce $\{\phi_n\}_{n=1}^{N_h}$, the canonical nodal basis of $\Fespace$. As a final ingredient, we introduce the dual basis $\{\phi_n^\star \}_{n=1}^{N_h}$ of $\Fespace$, which satisfies
\begin{equation}
 \int_{\Omega} \phi_n \phi_m^\star \diff{x} = \delta_{n,m}, \qquad n,m = 1, \ldots, N_h.
 \label{eq:dual_basis}
\end{equation}

In our constructions, we will need an interpolant $\calI_h$ that is stable in $\polH^r(\Omega)$ and has suitable approximation properties, namely,
\begin{align}
    \label{eq:Interpolant_1}
    \| \calI_h w \|_{\polH^r(\Omega)} &\lesssim \| w \|_{\polH^r(\Omega)}, & r &\in \left[0,\frac32 \right),
    \\
    \label{eq:Interpolant_2}
    \| w - \calI_h w \|_{\polH^t(\Omega)} &\lesssim h^{m-t} \| w \|_{\polH^m(\Omega)}, & m &\in [0,2], \quad t \in \left[ 0,\min\left\{\frac32, m \right\} \right].
\end{align}
A suitable choice is the so-called Scott-Zhang interpolant \cite{MR3118443,MR4238777}. A proof of the stability property for the Scott-Zhang interpolant is given in \cite[Corollary 3.7]{MR4238777}. To see this, we set $m=2$ and $q=2$ in estimate (3.27) of that reference to obtain 
\[
\| \calI_h w \|_{B_{2}^{\frac{3}{2}\gamma}(\Ldeux)} \lesssim \| w \|_{B_{2}^{\frac{3}{2}\gamma}(\Ldeux)} \qquad \forall w \in B_{2}^{\frac{3}{2}\gamma}(\Ldeux),
\]
where $\gamma \in (0,1)$. We then choose  $\gamma$  appropriately and use  $B_{2}^{\frac{3}{2}\gamma}(\Omega) = H^{\frac{3}{2}\gamma}(\Omega)$ to deduce, for $r \in [0,3/2)$, the bound
\[
  \| \calI_h w \|_{H^{r}(\Omega)} \lesssim \| w \|_{H^{r}(\Omega)}, \qquad \forall w \in H^{r}(\Omega).
\]

\subsection{The discrete Laplacian}
\label{sub:Deltah}

We introduce the discrete Laplacian as the linear mapping $\Delta_h : \Fespace \to \Fespace$ defined by
\begin{equation}
\label{eq:DefofDeltah}
  \int_\Omega \Delta_h v_h w_h \diff x  = - \int_\Omega \GRAD v_h \cdot \GRAD w_h \diff x \qquad \forall v_h ,w_h \in \Fespace.
\end{equation}
From its definition, it follows that the map $\Delta_h$ is symmetric, invertible, and negative definite. Therefore, the classical spectral theorem of linear algebra guarantees that there exists $\{(\Phi_n,\Lambda_n)\}_{n=1}^{N_h} \subset \Fespace \times \Real^+$ such that
\begin{equation}
\label{eq:DefofEigenPairsH}
  -\Delta_h \Phi_n = \Lambda_n \Phi_n, \qquad n = 1, \ldots, N_h,
\end{equation}
\ie
\[
  \int_{\Omega} -\Delta_h \Phi_n w_h \diff{x} = \int_{\Omega} \nabla \Phi_n \cdot \nabla w_h \diff{x} = \Lambda_n \int_{\Omega} \Phi_n w_h \diff{x} 
  \qquad
  \forall w_h \in \Fespace.
\]
The set of eigenfunctions $\{ \Phi_n \}_{n=1}^{N_h}$ is an orthonormal basis of $\Fespace$ in $\Ldeux$ and an orthogonal basis of $\Fespace$ in $\Hunz$. In other words, this family satisfies
\begin{equation}
\label{eq:orthogonal_ortonormal}
  \int_\Omega \Phi_n \Phi_m \diff x = \delta_{n,m}, 
  \qquad
  \int_\Omega \GRAD \Phi_n \cdot \GRAD  \Phi_m \diff{x} = \Lambda_n \delta_{n,m},
  \qquad
  n,m = 1, \ldots, N_h.
\end{equation}
Finally, we note that the eigenvalues satisfy the bounds
\begin{equation}
\label{eq:eigenvalues}
  C_P^{-2} \leq \Lambda_1 < \Lambda_2 \leq \cdots \leq  \Lambda_{N_h} \lesssim h^{-2},
\end{equation}
where $C_P$ denotes the best constant in Poincar\'e's inequality. The bound $C_P^{-{2}} \leq \Lambda_1$ follows from the properties of the Rayleigh quotient and a Poincar\'e inequality, and a proof of the bound $\Lambda_{N_h} \lesssim h^{-2}$ can be found in \cite[page 53]{MR2249024}; the latter relies on the quasiuniformity of the meshes.

\begin{remark}[coefficient vectors]
  Notice that $w_h \in \Fespace$ has two ``canonical'' representations. Indeed, we may write
  \[
    w_h = \sum_{n=1}^{N_h} \hat{W}_n \Phi_n,
    \qquad
    w_h = \sum_{n=1}^{N_h} W_n \phi_n.
  \]
  It is clear that, in general, $W_n \neq \hat{W}_n$, where $n \in \{1, \ldots, N_h\}$.
\end{remark}

For $r \in \Real$, we may then define $(-\Delta_h)^r : \Fespace \to \Fespace$ as follows. If
\[
  w_h = \sum_{n=1}^{N_h} \hat{W}_n \Phi_n \in \Fespace,
\]
then
\begin{equation}
\label{eq:Lapsh}
  (-\Delta_h)^r w_h \coloneqq \sum_{n=1}^{N_h} \Lambda_n^r \hat{W}_n \Phi_n \in \Fespace.
\end{equation}

How the powers of the discrete Dirichlet Laplacian approximate the powers of the Dirichlet Laplacian was studied in \cite{MR1255054}. In particular, since $\Omega$ is convex, we have that, for every $r \in [0,1]$ and $F \in L^2(\Omega)$,
\begin{equation}
\label{eq:Matsuki}
  \left\| (-\Delta)^{-r} F - (-\Delta_h)^{-r} P_h F \right\|_\Ldeux \lesssim h^{2r} \| F \|_\Ldeux.
\end{equation}
Here, $P_h$ denotes the $L^2$-projection onto $\Fespace$. Estimate \cref{eq:Matsuki} follows from an application of \cite[Theorem 1]{MR1255054}.  This result states that, in the notation of that paper, if condition $(A_{\epsilon,0})$ holds, namely,
\[
  \left\| (-\Delta)^{-1} F - (-\Delta_h)^{-1} P_h F \right\|_\Ldeux \lesssim h^{2} \| F \|_\Ldeux \qquad \forall F \in L^2(\Omega),
\]
then estimate \cref{eq:Matsuki} is valid. Condition $(A_{\epsilon,0})$ follows from a basic duality argument.

Finally, given $r \in \Real$, we denote by $\polH_h^r(\Omega)$ the space $\Fespace$ equipped with the following norm:
\begin{equation}
\label{eq:discrete_norm}
  \| w_h \|_{\polH_h^r(\Omega)} \coloneqq \left( \sum_{n=1}^{N_h} \Lambda_n^r |\hat{W}_n|^2 \right)^{\frac{1}{2}}.
\end{equation}
The use of this notation for $\polH_h^r(\Omega)$ and $ \| \cdot \|_{\polH_h^r(\Omega)}$ is motivated by the following fact.

\begin{proposition}[norm equivalence]
\label{prop:Norms}
  Let $r \in (-\tfrac12, \frac32)$. Then,
  \[
    \| w_h \|_{\polH_h^r(\Omega)} \eqsim \| w_h \|_{\polH^r(\Omega)} \qquad \forall w_h \in \Fespace.
  \]
  The implicit constants in this equivalence are independent of $h$.
\end{proposition}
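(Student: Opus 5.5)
We prove the equivalence first at the integer endpoints $r=0$ and $r=1$, then pass to the interval $r\in[0,\tfrac32)$ by interpolation, and finally treat $r\in(-\tfrac12,0)$ by duality.

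\textbf{Endpoints $r=0$ and $r=1$.} For $r=0$ the two norms coincide, because $\{\Phi_n\}$ is $L^2$-orthonormal and $\{\varphi_k\}$ is an orthonormal basis of $\Ldeux$:
\[
\|w_h\|_{\polH_h^0(\Omega)}=\|w_h\|_{\Ldeux}=\|w_h\|_{\polH^0(\Omega)}.
\]
For $r=1$, \cref{eq:orthogonal_ortonormal} and \cref{eq:LapEigenPairs} give
\[
\|w_h\|_{\polH_h^1(\Omega)}^2=\|\GRAD w_h\|_{\Ldeux}^2=\|w_h\|_{\polH^1(\Omega)}^2.
\]
Again we have equality.

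\textbf{The range $r\in[0,\tfrac32)$.} Our plan is to use interpolation of the discrete scale. The spaces $\polH^r_h$ form an exact Hilbert interpolation scale, since they are domains of powers of the self-adjoint operator $-\Delta_h$ on $(\Fespace,L^2)$. The upper bound $\|w_h\|_{\polH^r}\lesssim\|w_h\|_{\polH^r_h}$ for $r\in[0,1]$ then follows by interpolating the identity map at the endpoints. For the reverse bound we need a map into $\Fespace$ that is uniformly bounded $\polH^r(\Omega)\to\polH^r_h$ for $r=0$ and $r=1$ and reproduces $\Fespace$. We will use the $L^2$ projection $P_h$. Its $L^2$ stability is trivial, and its $H^1_0$ stability on quasiuniform meshes is classical. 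Interpolating $P_h$ gives $\|w_h\|_{\polH^r_h}=\|P_hw_h\|_{\polH^r_h}\lesssim\|w_h\|_{\polH^r}$.

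To reach $r\in(1,\tfrac32)$ we will instead interpolate between $r=1$ and some fixed $\tilde r\in(r,\tfrac32)$. This requires the equivalence at $\tilde r$ itself, and this is the main obstacle: the discrete scale does not extend above $\Fespace$, and $\Fespace\not\subset\polH^{3/2}$. The first thing to try is a direct argument. Write $w_h=\calI_h w$ for suitable $w$, or use $w_h=(-\Delta_h)^{-1}f_h$, and compare with $(-\Delta)^{-1}f_h$ through \cref{eq:Matsuki}. Combining this comparison with an inverse inequality $\|v_h\|_{\polH^{\tilde r}}\lesssim h^{1-\tilde r}\|v_h\|_{\polH^1}$ on quasiuniform meshes, together with the eigenvalue bound \cref{eq:eigenvalues}, should give the bound on each dyadic spectral band $\Lambda_n\sim 2^j$. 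Summing over bands then yields the equivalence. The stability \cref{eq:Interpolant_1}--\cref{eq:Interpolant_2} of $\calI_h$ up to $\tfrac32$ is the tool that lets us replace $P_h$ when needed.

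\textbf{Negative $r\in(-\tfrac12,0)$.} Here we argue by duality. By definition,
\[
\|w_h\|_{\polH^{r}_h}=\sup_{v_h\in\Fespace}\frac{(w_h,v_h)}{\|v_h\|_{\polH^{-r}_h}}.
\]
Since $-r\in(0,\tfrac12)$, the already-proved equivalence lets us replace $\|v_h\|_{\polH^{-r}_h}$ by $\|v_h\|_{\polH^{-r}}$, which gives $\|w_h\|_{\polH^{r}_h}\lesssim\|w_h\|_{\polH^{r}}$. For the converse, we take the supremum over $v\in\polH^{-r}(\Omega)$ and write $(w_h,v)=(w_h,P_hv)$. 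Because $P_h$ is stable in $\polH^{-r}$ for $-r\in[0,1]$ (by the interpolation above), this yields $\|w_h\|_{\polH^{r}}\lesssim\|w_h\|_{\polH^{r}_h}$. Every constant in these steps depends only on quasiuniformity and $\Omega$, and hence is independent of $h$.
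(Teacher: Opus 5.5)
Your treatment of $r\in[0,1]$ is correct: the identity map gives one direction, and $P_h$ with its uniform $L^2$ and $H^1_0$ stability gives the other. Your duality argument for $r\in(-\tfrac12,0)$ is also correct, and in fact it works for all $r\in[-1,0)$. Together these flesh out what the paper compresses into ``follows by interpolation''; the paper proves nothing beyond $[0,1]$ itself and defers the rest to \cite[Lemma 2.2]{MR2461254}.

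The genuine gap is $r\in(1,\tfrac32)$, where you give no actual argument. Interpolating between $1$ and $\tilde r$ presupposes the equivalence at $\tilde r$, which is the claim itself. Substituting $\calI_h$ for $P_h$ does not break the circle. \cref{eq:Interpolant_1} bounds $\calI_h$ in the continuous norm of $\polH^{\tilde r}(\Omega)$, whereas the interpolation argument needs a uniform bound into $\polH_h^{\tilde r}(\Omega)$. The dyadic-band sketch does not close it either. \cref{eq:Matsuki} is an $L^2$ estimate for exponents in $[0,1]$ and gives no control of $\polH^{\tilde r}$ norms with $\tilde r>1$. Moreover, discrete eigenfunctions from different bands are not orthogonal in $\polH^{\tilde r}(\Omega)$. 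Band-wise bounds can therefore only be summed through an almost-orthogonality estimate, which amounts to multilevel norm theory and is as hard as the proposition.

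The missing idea is the Ritz projection $R_h$ onto $\Fespace$, combined with $H^2$ regularity on the convex domain.

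\textbf{First direction.} For $v\in\polH^2(\Omega)$ one has $-\Delta_hR_hv=P_h(-\Delta v)$, hence $\|R_hv\|_{\polH_h^2(\Omega)}\le\|v\|_{\polH^2(\Omega)}$. Trivially, $\|R_hv\|_{\polH_h^1(\Omega)}\le\|v\|_{\polH^1(\Omega)}$. Interpolating and using $R_hw_h=w_h$ gives $\|w_h\|_{\polH_h^r(\Omega)}\lesssim\|w_h\|_{\polH^r(\Omega)}$ for $r\in[1,\tfrac32)$.

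\textbf{Converse.} Set $f_h=-\Delta_hw_h$ and $w=(-\Delta)^{-1}f_h\in\polH^2(\Omega)$, so that $w_h=R_hw$ and $\|w_h\|_{\polH^r(\Omega)}\le\|w\|_{\polH^r(\Omega)}+\|w-R_hw\|_{\polH^r(\Omega)}$.
\begin{itemize}
\item For the first term, $\|w\|_{\polH^r(\Omega)}=\|f_h\|_{\polH^{r-2}(\Omega)}\lesssim\|f_h\|_{\polH_h^{r-2}(\Omega)}$ by your own duality step at the index $r-2\in(-1,-\tfrac12)$.
\item For the second term, combine \cref{eq:Interpolant_2} with $m=2$, the inverse inequality $\|v_h\|_{\polH^r(\Omega)}\lesssim h^{1-r}\|v_h\|_{\polH^1(\Omega)}$, and the $H^1$ error bound for $R_h$. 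This gives $\|w-R_hw\|_{\polH^r(\Omega)}\lesssim h^{2-r}\|f_h\|_{\Ldeux}$, which is $\lesssim\|f_h\|_{\polH_h^{r-2}(\Omega)}$ by \cref{eq:eigenvalues}.
\end{itemize}
Since $\|f_h\|_{\polH_h^{r-2}(\Omega)}=\|w_h\|_{\polH_h^r(\Omega)}$, the equivalence follows on $(1,\tfrac32)$.
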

\begin{proof}
  Directly from the definition of the norm $\| \cdot \|_{\polH_h^r(\Omega)}$ and the relations in \cref{eq:orthogonal_ortonormal}, it follows that
  \[
    \| w_h \|_{\polH_h^0(\Omega)} = \| w_h \|_{\Ldeux}, \qquad \| w_h \|_{\polH_h^1(\Omega)} = \| \GRAD w_h \|_{\Ldeuxd}.
  \]
  For $r \in [0,1]$, the result follows by interpolation. For the rest of the proof, we refer the reader to \cite[Lemma 2.2]{MR2461254}.
\end{proof}

\section{An ideal discrete problem}
\label{sec:DiscreteProblemVerA}

We now introduce a numerical scheme that directly follows the theory developed in \cref{sec:Analysis}. Given $s \in (\tfrac12,1)$ and $\theta \in (1-s,s)$, we define the bilinear form
\begin{equation}
\label{eq:A_h}
  \calA_h : \polH_h^{s-\theta}(\Omega) \times \polH_h^{s+\theta}(\Omega) \to \Real, 
  \qquad
  (v_h,w_h) \mapsto \calA_h (v_h,w_h) \coloneqq  \sum_{n=1}^{N_h} \Lambda_n^s \hat{V}_n \hat{W}_n.
\end{equation}
Using the definition of $\Lapsh$, given in \cref{eq:Lapsh}, and the properties of $\{ \Phi_n \}_{n=1}^{N_h}$ stated in \cref{eq:orthogonal_ortonormal}, we obtain, for every $v_h,w_h$ in $\Fespace$,
\begin{equation}
\label{eq:identity}
\begin{aligned}
  \calA_h(v_h,w_h) &= \sum_{n,m=1}^{N_h} \int_\Omega \Lapsh \Phi_n \Phi_m \hat{V}_n \hat{W}_m \diff x
    \\
    &= \int_\Omega \sum_{n=1}^{N_h} \hat{V}_n \Lapsh\Phi_n \sum_{m=1}^{N_h} \hat{W}_m \Phi_m \diff x
    = \int_\Omega \Lapsh v_h w_h \diff x.
\end{aligned}
\end{equation}

We now present our ideal discrete problem: Find $u_h \in \polH^{s-\theta}_h(\Omega)$ such that
\begin{equation}
\label{eq:IdealDiscrProblem}
  \calA_h(u_h,v_h) = \langle \mu, v_h \rangle, \qquad \forall v_h \in \polH^{s+\theta}_h(\Omega).
\end{equation}
We immediately observe that, since $\Fespace \hookrightarrow W^{1,\infty}_0(\Omega)$, the right-hand side of the previous expression is well-defined. In addition note that, if we set, in \cref{eq:IdealDiscrProblem}, $v_h = \phi_n$ with $n \in \{ 1,\ldots, N_h\}$ we obtain that
\[
  \calA_h(u_h,\phi_n) = \langle \mu, \phi_n \rangle.
\]
We now use \cref{eq:dual_basis} and write $\langle \mu, \phi_n \rangle$ as
\[
 \langle \mu, \phi_n \rangle = \sum_{m=1}^{N_h} \langle \mu, \phi_m \rangle \int_{\Omega} \phi^\star_m \phi_n \diff{x},
\]
Thus, using the identity \cref{eq:identity} we can obtain that
\[
 \int_{\Omega} \left[ \Lapsh u_h  - \sum_{m=1}^{N_h} \langle \mu, \phi_m \rangle  \phi^\star_m \right] \phi_n \diff x = 0,
 \qquad 
 n = 1, \ldots N_h.
\]
Since $\{ \phi_n \}_{n=1}^{N_h}$ is a basis for $\Fespace$, problem \cref{eq:IdealDiscrProblem} can be equivalently rewritten as
\[
  \Lapsh u_h = M_h,
  \qquad
  M_h \coloneqq \sum_{m=1}^{N_h} \langle \mu, \phi_m \rangle  \phi^\star_m,
  \qquad
  M_h \in \Fespace.
\]
Existence and uniqueness then follow immediately. In addition, we may write
\[
  u_h = (-\Delta_h)^{-s} M_h = \frac{2\sin(\pi s)}\pi \int_0^\infty t^{1-2s}(t^2 I - \Delta_h)^{-1} M_h \diff t,
\]
where we used the Balakrishnan formula \cref{eq:DefOfBalakrishnan} with $\Delta$ replaced by $\Delta_h$.

Notice, however, that none of the considerations given above yield statements that are uniform for $h>0$. The next result guarantees uniformity.

\begin{theorem}[BNBh]
\label{thm:BNBh}
Let $s \in (\tfrac12,1)$ and choose $\theta \in (1-s,s)$. The bilinear form $\calA_h$ satisfies the following inf-sup conditions:
  \begin{align}
  \label{eq:inf_sup_Ah_1}
    \inf_{v_h \in \polH_h^{s-\theta}(\Omega)}\sup_{w_h \in \polH_h^{s+\theta}(\Omega)} \frac{ \calA_h(v_h,w_h) }{ \| v_h \|_{\polH_h^{s-\theta}(\Omega)} \| w_h \|_{\polH_h^{s+\theta}(\Omega)} } & \geq 1,
    \\
    \label{eq:inf_sup_Ah_2}
    \inf_{w_h \in \polH_h^{s+\theta}(\Omega)}\sup_{v_h \in \polH_h^{s-\theta}(\Omega)} \frac{ \calA_h(v_h,w_h) }{ \| v_h \|_{\polH_h^{s-\theta}(\Omega)} \| w_h \|_{\polH_h^{s+\theta}(\Omega)} } & \geq 1.
  \end{align}
  Consequently, if $s \in (\tfrac12,\tfrac34)$, the unique solution $u_h \in \Fespace$ of problem \cref{eq:IdealDiscrProblem} satisfies
  \begin{equation}
  \label{eq:discrete_stability}
  \| u_h \|_{\polH^{s-\theta}(\Omega)} \eqsim \| u_h \|_{\polH^{s-\theta}_h(\Omega)} \lesssim \| \mu \|_{\calM(\Omega)},
  \end{equation}
  where the implicit constant is independent of $h$.
\end{theorem}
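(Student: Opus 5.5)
The inf-sup conditions \cref{eq:inf_sup_Ah_1} and \cref{eq:inf_sup_Ah_2} are the discrete analogue of \cref{thm:BNB}, and I would prove them the same way, using the discrete eigenbasis $\{\Phi_n\}_{n=1}^{N_h}$ instead of $\{\varphi_k\}$. Given $v_h = \sum_n \hat V_n \Phi_n$, I set $w_h \coloneqq \sum_n \Lambda_n^{-\theta}\hat V_n \Phi_n = (-\Delta_h)^{-\theta} v_h \in \Fespace$. Then, by \cref{eq:A_h} and \cref{eq:discrete_norm},
\[
  \calA_h(v_h,w_h) = \sum_{n=1}^{N_h} \Lambda_n^{s-\theta}|\hat V_n|^2 = \| v_h \|_{\polH_h^{s-\theta}(\Omega)}^2,
  \qquad
  \| w_h \|_{\polH_h^{s+\theta}(\Omega)} = \| v_h \|_{\polH_h^{s-\theta}(\Omega)} .
\]
This gives \cref{eq:inf_sup_Ah_1} with constant exactly $1$. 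For \cref{eq:inf_sup_Ah_2}, given $w_h$, I take $v_h \coloneqq (-\Delta_h)^{\theta} w_h$ and obtain the symmetric identities. Boundedness of $\calA_h$ with constant $1$ follows from the Cauchy--Schwarz inequality, exactly as in the continuous case. None of these constants depend on $h$.

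For the stability bound \cref{eq:discrete_stability}, the inf-sup condition gives
\[
  \| u_h \|_{\polH_h^{s-\theta}(\Omega)} \le \sup_{w_h} \frac{\langle \mu, w_h\rangle}{\| w_h \|_{\polH_h^{s+\theta}(\Omega)}}
  \le \| \mu \|_{\calM(\Omega)} \sup_{w_h} \frac{\| w_h\|_{C(\bar\Omega)}}{\| w_h \|_{\polH_h^{s+\theta}(\Omega)}} .
\]
It remains to bound $\| w_h\|_{C(\bar\Omega)} \lesssim \| w_h \|_{\polH_h^{s+\theta}(\Omega)}$ uniformly in $h$. I would do this by passing to the continuous norm with \cref{prop:Norms} and then using the embedding $\polH^{s+\theta}(\Omega)\hookrightarrow C(\bar\Omega)$ from \cref{rk:role_s_theta}. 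The same proposition, applied with $r = s-\theta$, gives the first equivalence in \cref{eq:discrete_stability}.

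The main obstacle is the range of validity of \cref{prop:Norms}, which requires $r\in(-\tfrac12,\tfrac32)$. The index $s-\theta\in(0,1)$ is always admissible, but we need $s+\theta<\tfrac32$. Since $\theta\in(1-s,s)$, we can choose $\theta$ close to $1-s$, which makes $s+\theta$ close to $1$. Requiring $s+\theta<2s<\tfrac32$, i.e.\ $s<\tfrac34$, guarantees $s+\theta\in(1,\tfrac32)$ for every admissible $\theta$. This is where the restriction $s\in(\tfrac12,\tfrac34)$ enters. The implicit constant then depends only on $s$, $\theta$, $\Omega$ and the quasiuniformity of the meshes, and not on $h$.
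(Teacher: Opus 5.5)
Your proposal is correct and follows essentially the same route as the paper: the inf-sup conditions are proved exactly as in \cref{thm:BNB} but with the discrete eigenpairs, and your choices $w_h=(-\Delta_h)^{-\theta}v_h$ and $v_h=(-\Delta_h)^{\theta}w_h$ give constant $1$ independently of $h$. The stability bound then follows, as in the paper, from \cref{eq:inf_sup_Ah_1}, \cref{prop:Norms} with $r=s-\theta$ and $r=s+\theta$, and the embedding $\polH^{s+\theta}(\Omega)\hookrightarrow C(\bar\Omega)$, with $s<\tfrac34$ used only to keep $s+\theta<\tfrac32$.
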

\begin{proof}
  The proof of the discrete inf-sup conditions \cref{eq:inf_sup_Ah_1} and \cref{eq:inf_sup_Ah_2} follows the approach of \cref{thm:BNB}, using the definitions of the norm $\| \cdot \|_{\mathbb{H}_h^{r}(\Omega)}$ given in \cref{eq:discrete_norm} and the bilinear form $\mathcal{A}_h$ given in \cref{eq:A_h}. The discrete stability estimate \cref{eq:discrete_stability} is derived using \cref{eq:inf_sup_Ah_1} and the equivalence stated in \cref{prop:Norms}. Note that, since $s - \theta \in (0,\tfrac12)$ and $s+\theta \in (1,\tfrac32)$, the equivalence result of \cref{prop:Norms} applies.
  For brevity, we omit the details.
\end{proof}

\subsection{Convergence}
\label{sub:convergence}

We now show the convergence of scheme \cref{eq:IdealDiscrProblem}. The argument is essentially a Strang-type result \cite[Section 27.4]{MR4269305}.

\begin{theorem}[convergence]
  Let $s \in (\tfrac12,\tfrac34)$ and $\vartheta \in (1-s,s)$. Let $u$ and $u_h $ solve problems \cref{eq:TheEqn} and \cref{eq:IdealDiscrProblem}, respectively. Then, for every $\theta \in (\vartheta,s)$, we have
  \[
    \| u - u_h \|_{\polH^{s-\theta}(\Omega)} \to 0,
    \qquad
    h \to 0.
  \]
\end{theorem}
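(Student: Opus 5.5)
The plan is a Strang-type argument combined with density. I would use the discrete inf-sup condition of \cref{thm:BNBh} for perturbations of the data. A smooth auxiliary problem, for which \cref{eq:Matsuki} gives a rate, carries the actual approximation. Fix $\theta \in (\vartheta,s)$ and $\epsilon>0$. Note that the solution $u$ of \cref{eq:WeakFormulation} does not depend on the choice of the parameter in $(1-s,s)$: the eigen-coefficients $u_k = \lambda_k^{-s}\langle \mu,\varphi_k\rangle$ are the same for every admissible choice. Hence \cref{thm:BNB} with $\vartheta$ gives $u \in \polH^{s-\vartheta}(\Omega) \hookrightarrow \polH^{s-\theta}(\Omega)$. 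I expect this regularity margin is what the hypothesis $\theta>\vartheta$ is meant to provide, for instance to make the density step below quantitative; the plan does not appear to need it.

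\emph{Step 1 (smooth approximant).} Truncate the eigen-expansion: $z \coloneqq \sum_{k\le K} u_k \varphi_k$, with $K$ large enough that $\| u - z\|_{\polH^{s-\theta}(\Omega)} \le \epsilon$. Then $F \coloneqq \Laps z \in L^2(\Omega)$ and $\calA(z,w) = (F,w)_{L^2(\Omega)}$ for all $w \in \polH^{s+\theta}(\Omega)$. Let $z_h \coloneqq (-\Delta_h)^{-s} P_h F$ be its ideal discrete counterpart, so that $\calA_h(z_h,w_h) = (F,w_h)_{L^2(\Omega)}$ for all $w_h \in \Fespace$. Split
\[
  u - u_h = (u - z) + (z - z_h) + (z_h - u_h).
\]

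\emph{Step 2 (consistency/stability term).} Since $s<\tfrac34$, we have $s+\theta<\tfrac32$, so $\Fespace \subset \polH^{s+\theta}(\Omega)$ and every $w_h$ is an admissible test function in \cref{eq:WeakFormulation}. Thus
\[
  \calA_h(u_h - z_h, w_h) = \langle \mu, w_h\rangle - (F,w_h)_{L^2(\Omega)} = \calA(u - z, w_h).
\]
Combine the inf-sup condition \cref{eq:inf_sup_Ah_1}, the boundedness of $\calA$, and \cref{prop:Norms} (valid since $s-\theta \in (0,\tfrac12)$ and $s+\theta\in(1,\tfrac32)$). This gives
\[
  \| u_h - z_h\|_{\polH^{s-\theta}(\Omega)} \lesssim \sup_{w_h} \frac{\calA(u-z,w_h)}{\|w_h\|_{\polH^{s+\theta}(\Omega)}} \le \| u - z\|_{\polH^{s-\theta}(\Omega)} \le \epsilon,
\]
uniformly in $h$.

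\emph{Step 3 (smooth term, the main obstacle).} We must show $\|z - z_h\|_{\polH^{s-\theta}(\Omega)} \to 0$, although \cref{eq:Matsuki} only controls the $L^2$ error: with $r=s$ it gives $\|z - z_h\|_{L^2(\Omega)} \lesssim h^{2s}\|F\|_{L^2(\Omega)}$. Write $z - z_h = (z - \calI_h z) + (\calI_h z - z_h)$.

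For the first piece, \cref{eq:Interpolant_2} with $m=2$ (note $z\in \polH^2(\Omega)$, being a finite eigen-sum) gives $h^{2-s+\theta}\|z\|_{\polH^2(\Omega)}$.

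For the discrete piece, \cref{prop:Norms} and the eigenvalue bound \cref{eq:eigenvalues} give the inverse estimate $\|v_h\|_{\polH^{s-\theta}_h(\Omega)} \lesssim h^{-(s-\theta)}\|v_h\|_{L^2(\Omega)}$. The $L^2$ error of $\calI_h z - z_h$ is at most the sum of the two $L^2$ errors above, so this piece is $\lesssim h^{-(s-\theta)}(h^2 + h^{2s})C(z)$. Since $2s - (s-\theta) = s+\theta > 0$, it tends to zero.

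\emph{Conclusion.} Altogether,
\[
  \limsup_{h\to0}\|u-u_h\|_{\polH^{s-\theta}(\Omega)} \lesssim \epsilon.
\]
Since $\epsilon$ is arbitrary, the limit is zero.

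The delicate point is that the constant in Step 3 depends on $z$, and hence on $\epsilon$, and can blow up as $K$ grows. The argument must therefore be organized as above, choosing $\epsilon$ first and letting $h\to 0$ afterward, rather than seeking a uniform rate.
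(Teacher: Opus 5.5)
Your proof is correct, but it follows a different route from the paper's.

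\emph{The paper's route.} The paper runs a Strang-type argument with an arbitrary $w_h\in\Fespace$, so it has to bound the consistency error $(\calA_h-\calA)(w_h,v_h)$ for a generic discrete function. It does this by applying \cref{eq:Matsuki} to $(-\Delta_h)^s w_h$ and then using two inverse inequalities:
\[
\|(-\Delta)^s v_h\|_{L^2(\Omega)}=\|v_h\|_{\polH^{2s}(\Omega)}\lesssim h^{-s+\theta}\|v_h\|_{\polH^{s+\theta}(\Omega)},
\qquad
\|w_h\|_{\polH^{2s}(\Omega)}\lesssim h^{-s-\vartheta}\|w_h\|_{\polH^{s-\vartheta}(\Omega)}.
\]
The margin $\theta>\vartheta$ is exactly what leaves the positive power $h^{\theta-\vartheta}$. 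Density of $C_0^\infty(\Omega)$ in $\polH^{s-\vartheta}(\Omega)$ and the Scott--Zhang interpolant then finish the proof.

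\emph{Your route.} You compare $u_h$ with the ideal discrete solution $z_h$ of a problem with $L^2$ data. Both discrete problems use exact data functionals, so $\calA_h(u_h-z_h,w_h)=\calA(u-z,w_h)$ on $\Fespace$. Hence $\calA_h-\calA$ never has to be estimated on a rough function: the uniform discrete inf-sup alone controls the singular part. \cref{eq:Matsuki} is needed only for smooth data, and you lift it from $L^2(\Omega)$ to $\polH^{s-\theta}(\Omega)$ with the elementary inverse estimate coming from $\Lambda_{N_h}\lesssim h^{-2}$.

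\emph{What each buys.} Your argument needs no auxiliary $\vartheta$. It also uses $s<\tfrac34$ only through $s+\theta<\tfrac32$. The paper additionally needs $2s<\tfrac32$, so that $(-\Delta)^s v_h\in L^2(\Omega)$ and \cref{prop:Norms} applies with $r=2s$.

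In return, the paper's bound depends on $u_\vare$ only through $\|u_\vare\|_{\polH^{s-\vartheta}(\Omega)}\le\|u\|_{\polH^{s-\vartheta}(\Omega)}+\vare$. Letting $\vare\to0$, it therefore yields the rate $\|u-u_h\|_{\polH^{s-\theta}(\Omega)}\lesssim h^{\theta-\vartheta}\|\mu\|_{\calM(\Omega)}$. Your constant depends on $\|z\|_{\polH^2(\Omega)}$ and $\|F\|_{L^2(\Omega)}$, which blow up with $K$, so as written you get convergence only.

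You can recover the same rate. Choose $K$ with $\lambda_K\le h^{-2}<\lambda_{K+1}$, and use $u\in\polH^{s-\vartheta}(\Omega)$ to bound $\|u-z\|_{\polH^{s-\theta}(\Omega)}$, $\|z\|_{\polH^2(\Omega)}$ and $\|F\|_{L^2(\Omega)}$ by powers of $\lambda_K$. Every term in your estimate then becomes $\lesssim h^{\theta-\vartheta}\|u\|_{\polH^{s-\vartheta}(\Omega)}$.
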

\begin{proof}
  Let $w_h \in \Fespace$ be arbitrary. Using the equivalence from \cref{prop:Norms} and an inf-sup condition from \cref{thm:BNBh}, we deduce that
  \begin{equation}
  \label{eq:estimate_wh_uh}
    \| w_h - u_h \|_{\polH^{s-\theta}(\Omega)} 
    \eqsim 
    \| w_h - u_h \|_{\polH^{s-\theta}_h(\Omega)} 
    \leq
    \sup_{v_h \in \polH^{s+\theta}_h(\Omega)} \frac{ \calA_h(w_h - u_h, v_h) }{ \| v_h \|_{\polH^{s+\theta}_h(\Omega)} }.
  \end{equation}
  Note that $s - \theta \in (0,\frac12)$, so \cref{prop:Norms} applies. To estimate the right-hand side of the previous expression, we write the numerator $\calA_h(w_h - u_h, v_h)$ as follows:
  \begin{align*}
    \calA_h(w_h - u_h, v_h) &= \calA_h(w_h,v_h) - \calA_h(u_h,v_h) \pm \calA(w_h,v_h) \pm \calA(u,v_h) 
    \\
    &= \left( \calA_h - \calA \right)(w_h,v_h) - \langle \mu, v_h \rangle + \langle \mu, v_h \rangle + \calA(w_h - u, v_h)
    \\
    &= \left( \calA_h - \calA \right)(w_h,v_h) + \calA(w_h - u, v_h),
  \end{align*}
  where we have used that $u$ and $u_h $ solve \cref{eq:TheEqn} and \cref{eq:IdealDiscrProblem}, respectively. Substituting this identity into   \cref{eq:estimate_wh_uh}, we obtain
  \[
    \| w_h - u_h \|_{\polH^{s-\theta}(\Omega)} \lesssim
    \sup_{v_h \in \polH^{s+\theta}_h(\Omega)} \frac{ \left( \calA_h - \calA \right)(w_h,v_h) }{ \| v_h \|_{\polH^{s+\theta}_h(\Omega)} }
    +
    \sup_{v_h \in \polH^{s+\theta}_h(\Omega)} \frac{ \calA(w_h - u, v_h) }{ \| v_h \|_{\polH^{s+\theta}_h(\Omega)} }
    \eqqcolon \mathrm{I} + \mathrm{II}.
  \]
  To bound the term $\mathrm{I}$, we first note that
  \begin{align*}
    \left( \calA - \calA_h \right)(w_h,v_h) &= \tensor[_{-s-\theta}]{ \left\langle \left[ \Laps - \Lapsh \right] w_h, v_h \right\rangle}{_{s+\theta}}
    \\
    &= \tensor[_{-s-\theta}]{ \left\langle \Laps\left[ (-\Delta_h)^{-s} - (-\Delta)^{-s} \right] \Lapsh w_h, v_h \right\rangle}{_{s+\theta}}
    \\
    &= \tensor[_{s-\theta}]{ \left\langle \left[ (-\Delta_h)^{-s} - (-\Delta)^{-s} \right] \Lapsh w_h, \Laps v_h \right\rangle}{_{-s+\theta}}
    \\
    &\leq \left\| \left[ (-\Delta_h)^{-s} - (-\Delta)^{-s} \right] \Lapsh w_h \right\|_\Ldeux \| \Laps v_h \|_\Ldeux.
  \end{align*}
  We note that every $v_h \in \Fespace$ belongs to $\mathbb{H}^r(\Omega)$ for every $r<\tfrac{3}{2}$. As a result, $(-\Delta)^s v_h \in \mathbb{H}^{r-2s}(\Omega)$. Since, by assumption, $s<\tfrac34$, every $v_h \in \Fespace$ thus satisfies that $\Laps v_h \in \Ldeux$. We may now invoke an inverse inequality to obtain
  \[
    \| \Laps v_h \|_\Ldeux = \| v_h \|_{\polH^{2s}(\Omega)} \lesssim h^{-s+\theta} \| v_h \|_{\polH^{s+\theta}(\Omega)} \lesssim h^{-s+\theta} \| v_h \|_{\polH^{s+\theta}_h(\Omega)}.
  \]
  Note that $2s, s+\theta \in (1,\tfrac{3}{2})$. In addition, \cref{eq:Matsuki} with $r=s$ gives that
  \begin{align*}
    \left\| \left[ (-\Delta_h)^{-s} - (-\Delta)^{-s} \right] \Lapsh w_h \right\|_\Ldeux &\lesssim h^{2s} \| \Lapsh w_h \|_\Ldeux
    \\
    &= h^{2s} \| w_h \|_{\polH_h^{2s}(\Omega)} \lesssim h^{2s} \| w_h \|_{\polH^{2s}(\Omega)},
  \end{align*}
  where, in the last step, we used that $s < \tfrac34$ and invoked \cref{prop:Norms}. We recall that $2s \in (1,\tfrac{3}{2})$. Gathering all the previous estimates, we see that
  \[
    \mathrm{I} \lesssim h^{2s} h^{-s+\theta}  \| w_h \|_{\polH^{2s}(\Omega)} =  h^{s+\theta} \| w_h \|_{\polH^{2s}(\Omega)} \lesssim h^{\theta-\vartheta} \| w_h \|_{\polH^{s-\vartheta}(\Omega)}.
  \]
  In the last step, we invoked an inverse inequality. Note that $\theta - \vartheta >0$.
  
  The estimate for $\mathrm{II}$ is fairly straightforward. In fact, we have
  \[
    \mathrm{II} \lesssim \| w_h - u \|_{\polH^{s-\theta}(\Omega)}.
  \]

  We can now proceed to show convergence. Fix $\vare>0$. It is known that there exists $u_\vare \in C_0^\infty(\Omega)$ such that
  \[
    \| u - u_\vare \|_{\polH^{s-\theta}(\Omega)} \lesssim \| u - u_\vare \|_{\polH^{s-\vartheta}(\Omega)} < \vare.
  \]
  Note that we have used that $\vartheta < \theta$. We now estimate $\| u - u_h \|_{\polH^{s-\theta}(\Omega)}$ as follows:
  \begin{align*}
    \| u - u_h \|_{\polH^{s-\theta}(\Omega)} &\leq \| u - u_\vare \|_{\polH^{s-\theta}(\Omega)} + \| u_\vare - w_h \|_{\polH^{s-\theta}(\Omega)} + \| w_h - u_h \|_{\polH^{s-\theta}(\Omega)}
    \\
    &\leq C_1 \vare + \| u_\vare - w_h \|_{\polH^{s-\theta}(\Omega)} + \| w_h - u_h \|_{\polH^{s-\theta}(\Omega)}
    \\
    &\leq C_2 \left(\vare + \| u_\vare - w_h \|_{\polH^{s-\theta}(\Omega)} + h^{\theta-\vartheta} \| w_h \|_{\polH^{s-\vartheta}(\Omega)} + \| w_h - u \|_{\polH^{s-\theta}(\Omega)} \right)
    \\
    &\leq C_2 \left( 2 \vare + 2\| u_\vare - w_h \|_{\polH^{s-\theta}(\Omega)}+ h^{\theta-\vartheta} \| w_h \|_{\polH^{s-\vartheta}(\Omega)} \right),
  \end{align*}
  where the constants $C_1$ and $C_2$ are independent of $h$ and $\vare$. Having chosen $u_\varepsilon$, we select $w_h \in \Fespace$ accordingly. Set $w_h = \calI_h u_\vare$ and choose $r = s-\vartheta$, $t = s-\theta$, and $m=s-\vartheta$ in \cref{eq:Interpolant_1} and \cref{eq:Interpolant_2} to obtain
  \begin{align*}
    \| \calI_h u_\vare \|_{\polH^{s-\vartheta}(\Omega)} &\leq C_3 \|  u_\vare \|_{\polH^{s-\vartheta}(\Omega)},
    \\
    \| u_\vare - \calI_h u_\vare \|_{\polH^{s-\theta}(\Omega)} &\leq C_3 h^{\theta - \vartheta} \| u_\vare \|_{\polH^{s-\vartheta}(\Omega)},
  \end{align*}
  where $C_3$ is independent of $w$ and $h$. We note that $r = s - \vartheta \in (0,\tfrac12)$, $t = s - \theta \in (0,\tfrac12)$, and $m = r$. In summary,
  \[
    \| u - u_h \|_{\polH^{s-\theta}(\Omega)} \leq 2C_2\vare + C_2(2C_3 + 1) h^{\theta-\vartheta} \| u_\vare \|_{\polH^{s-\vartheta}(\Omega)}.
  \]
  We can now set $h\ll1$, sufficiently small, and convergence follows.
\end{proof}

\subsection{Rates of convergence}
\label{sub:Rates}

In the previous section, provided $s \in(\tfrac12,\tfrac34)$, we obtained convergence without rates for scheme \cref{eq:IdealDiscrProblem}. This is not surprising, as the regularity of the solution is barely sufficient for the problem to be well-defined. Therefore, if rates of convergence are desired, the error must be measured in a weaker norm. We do that here.

\begin{theorem}[convergence rate]
\label{thm:Duality}
Let $s \in (\tfrac12,\tfrac34)$ and $\theta \in (1-s,s)$. If $u \in \polH^{s-\theta}(\Omega)$ and $u_h \in \polH^{s-\theta}_h(\Omega)$ denote the solutions to \cref{eq:TheEqn} and \cref{eq:IdealDiscrProblem}, respectively, then
  \begin{equation}
  \label{eq:error bound_first}
    \| u - u_h \|_\Ldeux \lesssim h^{s-\theta} \| \mu \|_{\calM(\Omega)}.
  \end{equation}
\end{theorem}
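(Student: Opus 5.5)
We argue by duality, working directly with the representations $u = (-\Delta)^{-s}\mu$ and $u_h = (-\Delta_h)^{-s} M_h$. Fix $g \in L^2(\Omega)$. We define the dual solutions $z \coloneqq (-\Delta)^{-s} g \in \polH^{2s}(\Omega)$ and $z_h \coloneqq (-\Delta_h)^{-s} P_h g \in \Fespace$. By \cref{pro:characterization} and the fact that $2s \in (1,\tfrac32)$, we have $z \in \polH^{2s}(\Omega) \hookrightarrow \polH^{s+\theta}(\Omega) \hookrightarrow C(\bar\Omega)$. Hence $z$ is an admissible test function in \cref{eq:WeakFormulation}. The plan is to prove the identity
\[
  (u - u_h, g)_{\Ldeux} = \langle \mu, z - z_h \rangle.
\]
First, we check $(u,g) = \calA(u,z) = \langle \mu, z\rangle$. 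Indeed, testing \cref{eq:WeakFormulation} with $v = z$ gives $\calA(u,z) = \sum_k \lambda_k^s u_k \lambda_k^{-s} g_k = (u,g)$. Second, the symmetry of $(-\Delta_h)^{-s}$ in $L^2$ and \cref{eq:dual_basis} give
\[
  (u_h,g) = (u_h, P_h g) = \big(M_h, (-\Delta_h)^{-s}P_h g\big) = (M_h, z_h) = \langle \mu, z_h\rangle.
\]
The last equality holds because $(M_h,\phi_n) = \langle \mu,\phi_n\rangle$ for every nodal basis function. Consequently,
\[
  |(u-u_h,g)| \le \|\mu\|_{\calM(\Omega)} \|z - z_h\|_{L^\infty(\Omega)},
\]
and everything reduces to the uniform estimate $\|z - z_h\|_{L^\infty(\Omega)} \lesssim h^{s-\theta}\|g\|_\Ldeux$.

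For this $L^\infty$ bound we split the error using the interpolant $\calI_h$:
\[
  z - z_h = (z - \calI_h z) + (\calI_h z - z_h).
\]
For the first term, we use the embedding $\polH^{s+\theta}(\Omega) \hookrightarrow C(\bar\Omega)$ of \cref{rk:role_s_theta} together with \cref{eq:Interpolant_2}, taking $m = 2s$ and $t = s+\theta < \tfrac32$. This gives
\[
  \|z - \calI_h z\|_{L^\infty} \lesssim h^{s-\theta}\|z\|_{\polH^{2s}(\Omega)} = h^{s-\theta}\|g\|_\Ldeux.
\]
For the discrete term we combine two facts. The first is the embedding applied to $\calI_h z - z_h \in \Fespace$. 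The second is the inverse inequality from $\polH^{s+\theta}$ to $L^2$ on the quasiuniform mesh, which is justified by \cref{prop:Norms} and \cref{eq:eigenvalues} since $s+\theta < \tfrac32$. Together they give
\[
  \|\calI_h z - z_h\|_{L^\infty} \lesssim \|\calI_h z - z_h\|_{\polH^{s+\theta}(\Omega)} \lesssim h^{-s-\theta}\|\calI_h z - z_h\|_\Ldeux.
\]
We then bound $\|\calI_h z - z_h\|_\Ldeux$ using $\|z - \calI_h z\|_\Ldeux \lesssim h^{2s}\|g\|_\Ldeux$ and \cref{eq:Matsuki} with $r = s$, which give $\|z - z_h\|_\Ldeux \lesssim h^{2s}\|g\|_\Ldeux$. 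Combining, the discrete term is at most $h^{2s-s-\theta}\|g\|_\Ldeux = h^{s-\theta}\|g\|_\Ldeux$. Taking the supremum over $g$ then yields \cref{eq:error bound_first}.

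The main obstacle is the $L^\infty$ control of $z - z_h$. In the naive route, the $L^2$ rate $h^{2s}$ from \cref{eq:Matsuki} is traded against an inverse estimate costing $h^{-(s+\theta)}$, and this balance works only because $2s - (s+\theta) = s - \theta > 0$. The argument therefore needs an honest $\polH^{s+\theta}$ inverse inequality, which is available by \cref{prop:Norms} and quasiuniformity because $s+\theta<\tfrac32$. Here the hypothesis $s<\tfrac34$ ensures both $2s<\tfrac32$ and $s+\theta<\tfrac32$. If the inverse-inequality step turned out to be delicate, we would instead estimate $\|\calI_h z - z_h\|_{\polH_h^{s+\theta}}$ spectrally, writing $\calI_h z - z_h = (\calI_h z - P_h z) + (P_h z - z_h)$. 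We would then bound $P_h z - z_h$ in the discrete norm via $-\Delta_h$ eigenexpansions.
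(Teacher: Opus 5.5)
Your proof is correct, and it takes a genuinely different and leaner route than the paper's. The paper runs a Strang-type Aubin--Nitsche argument with $\zeta=(-\Delta)^{-s}e$ and the interpolant $\zeta_h=\mathcal{I}_h\zeta$. It splits $\|e\|_{L^2(\Omega)}^2$ into $\langle\mu,\zeta-\zeta_h\rangle$, a consistency term $(\mathcal{A}_h-\mathcal{A})(u_h,\zeta_h)$, and $\mathcal{A}(u_h,\zeta_h-\zeta)$. The last two terms need the discrete stability bound \cref{eq:discrete_stability}, \cref{eq:Matsuki} applied to $(-\Delta_h)^s u_h$, an inverse estimate on $u_h$, and $\|(-\Delta)^s\mathcal{I}_h\zeta\|_{L^2(\Omega)}\lesssim\|\zeta\|_{\mathbb{H}^{2s}(\Omega)}$; that last bound is where $2s<\tfrac32$ is really used.

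Your argument is what one obtains by choosing instead $\zeta_h=(-\Delta_h)^{-s}P_h e$, the discrete dual solution. Because the ideal scheme is exactly adjoint consistent, $(u_h,g)=\langle\mu,(-\Delta_h)^{-s}P_h g\rangle$, the sum of the last two terms vanishes identically. Everything then reduces to an $\mathbb{H}^{s+\theta}(\Omega)$ (hence $L^\infty$) error bound for the discrete solution operator with $L^2$ data, which you get from \cref{eq:Matsuki}, \cref{eq:Interpolant_2} and an inverse inequality.

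As a result you never need \cref{thm:BNBh} or \cref{eq:discrete_stability}, nor do you apply $(-\Delta)^s$ to finite element functions. The only restriction you actually use is $s+\theta<\tfrac32$. Contrary to your closing remark, $2s<\tfrac32$ plays no role: $\mathbb{H}^{2s}(\Omega)\hookrightarrow\mathbb{H}^{s+\theta}(\Omega)$ is immediate from the spectral definition, and \cref{eq:Interpolant_2} allows $m=2s\le 2$. So your proof also covers $s\in[\tfrac34,1)$ with $\theta<\tfrac32-s$, a range the paper's argument does not reach. Moreover, since only $\|z-z_h\|_{L^\infty(\Omega)}$ matters, the two-dimensional inverse estimate $\|w_h\|_{L^\infty(\Omega)}\lesssim h^{-1}\|w_h\|_{L^2(\Omega)}$ together with a local $L^\infty$ interpolation bound would even give the rate $h^{2s-1}$, with no loss in $\theta$.

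The paper's version is the generic Strang-type template, valid for any $\zeta_h$ and parallel to its energy-norm convergence proof. It pays for that generality with the consistency term, and hence with the restriction $s<\tfrac34$.
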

\begin{proof}
  The proof proceeds similarly to the classical Aubin-Nitsche duality result but includes an additional step to account for the consistency error, which is the difference between $\calA$ and $\calA_h$.
  
  Define $e \coloneqq u-u_h \in \polH^{s-\theta}(\Omega)$ and consider the dual problem: Find $\zeta \in\polH^{s+\theta}(\Omega)$ such that
  \begin{equation}
    \calA(w,\zeta) = \int_\Omega e w \diff x \quad \forall w \in \polH^{s-\theta}(\Omega).
    \label{eq:dual_problem}
  \end{equation}
  Owing to \cref{thm:BNB}, this problem has a unique solution. On the other hand, it is clear that this problem implies 
  \[
    \sum_{k=1}^\infty \lambda_k^s w_k \zeta_k = \sum_{k=1}^\infty e_k w_k \qquad \implies \qquad \zeta_k = \lambda_k^{-s} e_k, \quad \forall k \in \mathbb{N}.
  \]
  Therefore,
  \[
    \| \zeta \|_{\polH^{2s}(\Omega)}^2 = \sum_{k=1}^\infty \lambda_k^{2s} \left| \lambda_k^{-s} e_k \right|^2 = \sum_{k=1}^\infty |e_k|^2 = \| u- u_h \|_\Ldeux^2.
  \]
  We now let $\zeta_h \in \Fespace$ but otherwise arbitrary, and use the fact that the error $e$ is an admissible test function in the dual problem \cref{eq:dual_problem} to obtain
  \begin{align*}
    \| e \|_\Ldeux^2 &= \calA(e,\zeta) = \langle \mu, \zeta \rangle - \calA(u_h,\zeta) 
%     \pm \langle \mu, \zeta_h \rangle 
    \pm \calA_h(u_h,\zeta_h) 
    \\
    &= \langle \mu, \zeta - \zeta_h \rangle + \left( \calA_h -\calA\right)(u_h, \zeta_h) + \calA(u_h, \zeta_h - \zeta)
    = \mathrm{I} + \mathrm{II} + \mathrm{III}.
  \end{align*}
  We consider each term individually.

  To bound the term $\mathrm{I}$, we first observe that $2s \in (1,\tfrac32)$ and $s + \theta \in (1,\tfrac32)$. Next, we choose $\zeta_h = \calI_h \zeta$ and set $t=s+\theta$ and $m = 2s$ in \cref{eq:Interpolant_2} to obtain
  \[
    \mathrm{I} \lesssim \| \mu \|_{\polH^{-s-\theta}(\Omega)} \| \zeta - \zeta_h \|_{\polH^{s+\theta}(\Omega)} \lesssim h^{s-\theta} \| \mu \|_{\calM(\Omega)} \| \zeta \|_{\polH^{2s}(\Omega)} = h^{s-\theta} \| \mu \|_{\calM(\Omega)} \| e \|_\Ldeux.
  \]
  Having chosen $\zeta_h$, we use the continuity of $\calA$ to easily bound the third term:
  \begin{align*}
    \mathrm{III} &\leq \| u_h \|_{\polH^{s-\theta}(\Omega)} \| \zeta - \zeta_h \|_{\polH^{s+\theta}(\Omega)}
      \lesssim h^{s-\theta} \| u_h \|_{\polH^{s-\theta}_h(\Omega)} \| \zeta \|_{\polH^{2s}(\Omega)}
      \\
      &\lesssim h^{s-\theta} \| \mu \|_{\calM(\Omega)} \| e \|_\Ldeux,
  \end{align*}
  where we used the discrete stability estimate \cref{eq:discrete_stability} and the fact that $\| \zeta \|_{\polH^{2s}(\Omega)} = \| e \|_\Ldeux$. Finally, we estimate the consistency error, which is encoded in the term $\mathrm{II}$:
  \begin{align*}
     \left( \calA_h - \calA \right)(u_h , \zeta_h)  &= \tensor[_{-s-\theta}]{ \left\langle  \left[ \Lapsh - \Laps \right]u_h, \zeta_h \right\rangle }{_{s+\theta}}
    \\
    &= \tensor[_{-s-\theta}]{ \left\langle  \Laps \left[ (-\Delta)^{-s} - (-\Delta_h)^{-s} \right] \Lapsh u_h, \zeta_h \right\rangle }{_{s+\theta}}
    \\
    &= \tensor[_{s-\theta}]{ \left\langle \left[ (-\Delta)^{-s} - (-\Delta_h)^{-s} \right] \Lapsh u_h, \Laps \zeta_h \right\rangle}{_{-s+\theta}}
    \\
    &\leq \left\| \left[ (-\Delta)^{-s} - (-\Delta_h)^{-s} \right] \Lapsh u_h \right\|_\Ldeux \| \Laps \zeta_h \|_\Ldeux
    \\
    &\lesssim h^{2s} \| \Lapsh u_h \|_\Ldeux \|\zeta_h \|_{\polH^{2s}(\Omega)},
  \end{align*}
  where we have used \cref{eq:Matsuki}. We may now proceed as follows:
  \begin{align*}
    \left| \left( \calA_h - \calA \right)( u_h , \zeta_h) \right| &\lesssim h^{2s} \| u_h \|_{\polH^{2s}_h(\Omega)} \| \zeta_h \|_{\polH^{2s}(\Omega)} \lesssim h^{2s} h^{-\theta-s} \| u_h \|_{\polH^{s-\theta}_h(\Omega)} \| e \|_\Ldeux
    \\
    &\lesssim h^{s-\theta} \| \mu\|_{\calM(\Omega)} \| e \|_\Ldeux,
  \end{align*}
  where we used the inverse estimate
  \[
    \| u_h \|_{\polH_h^{2s}(\Omega)} \lesssim h^{-s-\theta} \| u_h \|_{\polH_h^{s-\theta}(\Omega)},
  \]
  which follows from the definition of $\| \cdot \|_{\polH^{r}_h(\Omega)}$ given in \cref{eq:discrete_norm} and the properties satisfied by the discrete eigenvalues stated in \cref{eq:eigenvalues}, the stability bound \cref{eq:Interpolant_1} with $r = 2s$, the fact that $\| \zeta \|_{\polH^{2s}(\Omega)} = \| e \|_\Ldeux$, and the discrete stability bound \cref{eq:discrete_stability}.

  In summary, we have deduced the following:
  \[
    \| e \|_\Ldeux^2 = \mathrm{I} + \mathrm{II} + \mathrm{III} \lesssim h^{s-\theta} \| \mu \|_{\calM(\Omega)} \| e \|_\Ldeux.
  \]
  This concludes the proof.
\end{proof}

\section{A practical scheme}
\label{sec:Gordito}

Scheme \cref{eq:IdealDiscrProblem} converges optimally in terms of regularity in $L^2(\Omega)$; see \cref{thm:Duality}. However, it is not practical. It requires knowledge of the spectral decomposition of the discrete Laplacian. In addition, its analysis requires us to restrict the range of $s$, \ie $s < \tfrac34$.

To address these shortcomings we now present a practical scheme that remains convergent in $L^2(\Omega)$ with the optimal rate $\mathcal{O}(h^{s-\theta})$. The idea is to use the scheme proposed in \cite{Sawyer} for a suitably regularized right-hand side.

\subsection{The diagonalization scheme}
\label{subsec:diagonalization}
Let $F \in \Ldeux$ and assume that $\Psi = (-\Delta)^{-s} F$. To approximate $\Psi$, after introducing the mesh $\Triang$ and the corresponding finite element space $\Fespace$, we choose $\calY > 0$, $\calK \in \Natural$, and define the parameters
\begin{equation}
\label{eq:Eigens}
  \Upsilon_k \coloneqq \left(\frac{\eta_k}\calY \right)^2,
  \qquad
  \psi_k \coloneqq  \frac{4 \sin(\pi s)}{\Upsilon_k^s \calY^2 \pi J_{1-s}(\eta_k)^2 },
  \qquad k = 1, \ldots, \calK,
\end{equation}
where $J_{\nu}$ is the Bessel function of the first kind ($\nu \in (0,1)$), and $\eta_k$ is the $k$-th positive root of $J_{-s}$. Next, for $k = 1, \ldots, \calK$, we define $\Psi_k \in \Fespace$ as the solution to
\begin{equation}
\label{eq:DefofUk}
  \int_\Omega \GRAD \Psi_k \cdot \GRAD v_h \diff x + \Upsilon_k \int_\Omega \Psi_k v_h \diff x = \int_\Omega F v_h \diff x \qquad \forall v_h \in \Fespace.
\end{equation}
Finally, we combine these results to obtain an approximate solution \cite[Section 4.2]{Sawyer}
\begin{equation}
\label{eq:DefofuhYK}
  \Psi_{h,\calY}^\calK = \sum_{k=1}^\calK \psi_k \Psi_k \in \Fespace.
\end{equation}

The convergence properties of this method are summarized in the following result.

\begin{proposition}[error estimate]
\label{prop:Sawyer}
  Let $s \in (0,1)$, $F \in \Ldeux$, $\Psi = (-\Delta)^{-s} F$, and $\Psi_{h,\calY}^\calK \in \Fespace$ be defined as in \cref{eq:DefofuhYK}. Then we have
  \[
    \| \Psi - \Psi_{h,\calY}^\calK\|_\Ldeux \lesssim \left[ h^{2s} + \exp \left(  -\frac{\calY}{\sqrt{C_P}}\right) + \left( \frac\calY\calK \right)^{2s} \right] \| F \|_\Ldeux,
  \]
  where $C_P$ is the best constant in Poincar\'e's inequality. In particular, we may choose $\calY \eqsim 2s|\log h|$ and $\calK \eqsim \tfrac\calY{h}$ to obtain
  \[
    \| \Psi - \Psi_{h,\calY}^\calK\|_\Ldeux \lesssim h^{2s} \| F \|_\Ldeux.
  \]
\end{proposition}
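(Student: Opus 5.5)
The error splits into three parts: a finite element part, a truncation-of-the-integral part, and a quadrature part. The natural intermediate object is $\Psi_h \coloneqq (-\Delta_h)^{-s}P_h F$. By the triangle inequality,
\[
  \| \Psi - \Psi_{h,\calY}^\calK\|_\Ldeux \leq \| \Psi - \Psi_h \|_\Ldeux + \| \Psi_h - \Psi_{h,\calY}^\calK \|_\Ldeux .
\]
The first term is bounded by $h^{2s}\|F\|_\Ldeux$ directly from \cref{eq:Matsuki} with $r=s$.

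For the second term, observe that, by \cref{eq:DefofUk}, $\Psi_k = (\Upsilon_k I - \Delta_h)^{-1}P_hF$. Expanding $P_hF = \sum_n \hat F_n \Phi_n$ in the discrete eigenbasis \cref{eq:DefofEigenPairsH}, both $\Psi_h$ and $\Psi_{h,\calY}^\calK$ act diagonally. Hence
\[
  \| \Psi_h - \Psi_{h,\calY}^\calK \|_\Ldeux \leq \max_{1\le n\le N_h} \left| \Lambda_n^{-s} - \sum_{k=1}^\calK \frac{\psi_k}{\Upsilon_k + \Lambda_n} \right| \, \|F\|_\Ldeux,
\]
where we used $\|P_hF\|_\Ldeux\le\|F\|_\Ldeux$. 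The problem therefore reduces to a scalar approximation of $\lambda\mapsto\lambda^{-s}$, uniformly over $\lambda\in[\Lambda_1,\Lambda_{N_h}]\subset[C_P^{-2},ch^{-2}]$ by \cref{eq:eigenvalues}.

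For the scalar problem, I would follow \cite{Sawyer}. There, $\lambda^{-s}$ is represented through the extension (Caffarelli--Silvestre/Stinga--Torrea) problem on the truncated cylinder $(0,\calY)$. The solution of the truncated one-dimensional ODE $-(y^{1-2s}w')' + y^{1-2s}\lambda w = 0$ with Neumann-type flux datum at $y=0$ and $w(\calY)=0$ is expanded in the Sturm--Liouville eigenfunctions associated with the roots $\eta_k$ of $J_{-s}$. This expansion produces exactly $\sum_{k\ge1}\psi_k/(\Upsilon_k+\lambda)$ with the weights \cref{eq:Eigens}, up to the normalization constant $d_s$.

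This yields two contributions:
\begin{enumerate}[(i)]
  \item \emph{Truncation of the cylinder.} The full-cylinder solution decays like $\exp(-\sqrt{\lambda}\,y)$. With $\lambda\ge\Lambda_1\ge C_P^{-2}$, the error from imposing the boundary condition at $y=\calY$ is $\lesssim \exp(-\calY\sqrt{\lambda})\lambda^{-s}\lesssim \exp(-\calY/\sqrt{C_P})$, up to the constant convention; I would quote this bound from the exponential-decay lemma for the extension.
  \item \emph{Truncation of the series at $\calK$ terms.} Since $\eta_k\sim \pi k$, we have $\Upsilon_k\sim (k/\calY)^2$, and asymptotics of $J_{1-s}(\eta_k)^2\sim 2/(\pi\eta_k)$ give $\psi_k\sim \calY^{-2}\Upsilon_k^{-s}\eta_k \sim k^{1-2s}\calY^{2s-2}$. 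The tail $\sum_{k>\calK}\psi_k/(\Upsilon_k+\lambda)\le\sum_{k>\calK}\psi_k/\Upsilon_k \sim \calY^{2s}\sum_{k>\calK}k^{-1-2s}\sim (\calY/\calK)^{2s}$, uniformly in $\lambda>0$.
\end{enumerate}
Combining the three contributions yields the first estimate. For the second, I would choose $\calY\eqsim 2s\sqrt{C_P}|\log h|$, so that the exponential term is $\lesssim h^{2s}$, and $\calK\eqsim\calY/h$, so that $(\calY/\calK)^{2s}\eqsim h^{2s}$.

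The main obstacle is the precise justification that the weights \cref{eq:Eigens} reproduce $\lambda^{-s}$ up to the truncation error. This requires the Bessel-function calculus: the normalization of the Sturm--Liouville eigenfunctions $y^{s}J_{-s}(\eta_k y/\calY)$, and uniform asymptotics of $\eta_k$ and $J_{1-s}(\eta_k)$ for all $k$, not only large $k$. I would rely on \cite[Section 4.2]{Sawyer} for these identities and only verify the tail sum carefully.
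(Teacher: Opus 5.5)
Your argument is correct. The paper gives no proof of its own here; it simply invokes \cite[Corollary 4.12]{Sawyer}. You reconstruct the argument instead, and your organization is useful.

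Since every $\Psi_k$ is computed in $\Fespace$ from the same datum $P_hF$, the scheme is a discrete functional calculus: $\Psi_{h,\calY}^\calK = r_\calK(-\Delta_h)P_hF$ with $r_\calK(\lambda)=\sum_{k=1}^\calK \psi_k/(\Upsilon_k+\lambda)$. This isolates the finite element error entirely in \cref{eq:Matsuki}, which the paper already has for convex $\Omega$. What remains is a scalar rational approximation of $\lambda^{-s}$ on $[\Lambda_1,\infty)$. Your tail bound $(\calY/\calK)^{2s}$ is uniform in $\lambda>0$, so $\Lambda_{N_h}\lesssim h^{-2}$ is not even needed.

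The citation buys brevity. Your route shows where each of the three terms comes from, and it can be made self-contained, because the step you flag as the main obstacle is elementary:
\begin{enumerate}[(a)]
  \item Normalize $\phi_k\propto y^sJ_{-s}(\eta_k y/\calY)$ in $L^2(y^{1-2s}\diff y)$ on $(0,\calY)$. The identities $\int_0^1 tJ_{-s}(\eta_k t)^2\diff t=\tfrac12 J_{1-s}(\eta_k)^2$ and $\Gamma(s)\Gamma(1-s)=\pi/\sin(\pi s)$ then give exactly $\psi_k=d_s\phi_k(0)^2$ with $d_s=2^{1-2s}\Gamma(1-s)/\Gamma(s)$.
  \item The sequences $\eta_k/k$ and $\eta_kJ_{1-s}(\eta_k)^2$ are positive, since $J_{-s}$ and $J_{1-s}$ have no common positive zero. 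They converge to $\pi$ and $2/\pi$, so $\eta_k\eqsim k$ and $\eta_kJ_{1-s}(\eta_k)^2\eqsim 1$ hold for all $k$, not just asymptotically.
  \item Comparing the truncated and untruncated extensions explicitly gives
\[
  \lambda^{-s}-\sum_{k=1}^\infty\frac{\psi_k}{\Upsilon_k+\lambda}
  =\frac{2\sin(\pi s)}{\pi}\,\lambda^{-s}\,\frac{K_s(\sqrt{\lambda}\,\calY)}{I_{-s}(\sqrt{\lambda}\,\calY)}
  \lesssim \lambda^{-s}e^{-2\sqrt{\lambda}\,\calY},
\]
so you do not need to quote a decay lemma. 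For $s=\tfrac12$ this reduces to $\lambda^{-1/2}\bigl(1-\tanh(\sqrt{\lambda}\,\calY)\bigr)$.
\end{enumerate}

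Your caveat about constants is justified. With $\Lambda_1\ge C_P^{-2}$ as in \cref{eq:eigenvalues}, one obtains $\exp(-2\calY/C_P)$. The statement's $\exp(-\calY/\sqrt{C_P})$ instead corresponds to the convention $\lambda_1\ge C_P^{-1}$. This inconsistency lies in the paper, not in your argument, and it does not matter once $\calY\eqsim|\log h|$ absorbs the constant.
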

\begin{proof}
  See \cite[Corollary 4.12]{Sawyer}.
\end{proof}

We refer to \cite{Sawyer} for motivation and additional properties of this scheme.

\subsection{A practical scheme}

We are now ready to describe our scheme. We assume that, for $\vare  >0$, we have a regularization $\mu_\vare \in \Ldeux$ of $\mu \in \mathcal{M}(\Omega)$ and that this regularization satisfies the following estimates:
\begin{align}
\label{eq:NegNormErrEst}
  \| \mu - \mu_\vare \|_{\polH^{-s-\theta}(\Omega) } & \lesssim \vare^{s+\theta-1},
  \\
% \end{equation}
% and
% \begin{equation}
\label{eq:BlowUpL2Norm}
  \| \mu_\vare \|_\Ldeux & \lesssim \vare^{-1}.
\end{align}
In \cref{pro:convolution} below, we provide an example of such a regularization. As the next step, following the diagonalization scheme described in \cref{subsec:diagonalization}, we choose $\calY>0$ and $\calK \in \Natural$, and define $\{(\Upsilon_k,\psi_k)\}_{k=1}^\calK$ as in \cref{eq:Eigens}. For $k=1,\ldots,\calK$, we then introduce the function $U_k^\vare \in \Fespace$ as the solution to \cref{eq:DefofUk}, but with the right-hand side $F$ replaced by the regularization $\mu_\vare$, \ie $U_k^\vare \in \Fespace$ solves
\begin{equation}
\label{eq:DefOfUkvare}
  \int_\Omega \GRAD U_k^\vare \cdot \GRAD v_h \diff x + \Upsilon_k \int_\Omega U_k^\vare v_h \diff x = \int_\Omega \mu_\vare v_h \diff x \qquad \forall v_h \in \Fespace.
\end{equation}
We then define the discrete solution as
\begin{equation}
\label{eq:DefOfuhYKvare}
  u_{h,\calY}^{\calK,\vare} = \sum_{k=1}^\calK \psi_k U_k^\vare.
\end{equation}

The main convergence properties of this scheme are as follows.

\begin{theorem}[error estimate]
\label{thm:MainEst}
  Let $s \in ( \tfrac12,1)$, $u$ be the solution to \cref{eq:TheEqn}, and let $u_{h,\calY}^{\calK,\vare} \in \Fespace$ be defined as in \cref{eq:DefOfuhYKvare} with
  \[
    \calY \eqsim 2s|\log h|, \qquad \calK \eqsim \frac\calY{h}, \qquad \vare \eqsim h.
  \]
  Then, for $h \leq 1$, we have the following a priori error bound:
  \begin{equation}
  \label{eq:error bound_second}
    \| u - u_{h,\calY}^{\calK,\vare} \|_\Ldeux \lesssim h^{s+\theta-1}.
  \end{equation}
\end{theorem}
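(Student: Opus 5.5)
We will split the error by inserting the continuous solution of the regularized problem. Let $u_\vare \coloneqq \Laps^{-1}\mu_\vare = (-\Delta)^{-s}\mu_\vare$, which is well defined since $\mu_\vare \in \Ldeux$. Then
\[
  \| u - u_{h,\calY}^{\calK,\vare} \|_\Ldeux \leq \| u - u_\vare \|_\Ldeux + \| u_\vare - u_{h,\calY}^{\calK,\vare} \|_\Ldeux \eqqcolon \mathrm{I} + \mathrm{II},
\]
and we bound the two terms separately.

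\emph{Regularization error $\mathrm{I}$.} Note that $u - u_\vare$ solves \cref{eq:WeakFormulation} with right-hand side $\mu - \mu_\vare$. This is legitimate because $\mu_\vare \in \Ldeux \subset \polH^{-s-\theta}(\Omega)$ and $u_\vare$ satisfies $\calA(u_\vare,v) = \int_\Omega \mu_\vare v \diff x$ for all $v \in \polH^{s+\theta}(\Omega)$. The inf-sup condition of \cref{thm:BNB}, more precisely the proof of \cref{eq:continuous_stability} before the measure bound is invoked, gives
\[
  \| u - u_\vare \|_{\polH^{s-\theta}(\Omega)} \leq \| \mu - \mu_\vare \|_{\polH^{-s-\theta}(\Omega)} .
\]
The same can be read off the spectral coefficients directly: $(u-u_\vare)_k = \lambda_k^{-s}(\mu-\mu_\vare)_k$. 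Since $s-\theta>0$ and $\lambda_1 \geq C_P^{-2}$, the $L^2$ norm is controlled by the $\polH^{s-\theta}$ norm. Combining this with \cref{eq:NegNormErrEst} yields
\[
  \mathrm{I} \lesssim \vare^{s+\theta-1} \eqsim h^{s+\theta-1}.
\]

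\emph{Discretization error $\mathrm{II}$.} We have $u_\vare = (-\Delta)^{-s}F$ with $F = \mu_\vare \in \Ldeux$. Moreover, by \cref{eq:DefOfUkvare}--\cref{eq:DefOfuhYKvare}, $u_{h,\calY}^{\calK,\vare}$ is exactly the diagonalization approximation \cref{eq:DefofuhYK} of $u_\vare$. Hence \cref{prop:Sawyer} with $\calY \eqsim 2s|\log h|$ and $\calK \eqsim \calY/h$, together with \cref{eq:BlowUpL2Norm}, gives
\[
  \mathrm{II} \lesssim h^{2s}\| \mu_\vare \|_\Ldeux \lesssim h^{2s}\vare^{-1} \eqsim h^{2s-1}.
\]
Since $\theta<s$ we have $2s-1 > s+\theta-1$, so for $h\leq 1$ it follows that $h^{2s-1}\leq h^{s+\theta-1}$. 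Adding $\mathrm{I}$ and $\mathrm{II}$ proves \cref{eq:error bound_second}.

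The only delicate point is checking that the constant in \cref{prop:Sawyer} does not depend on $F$ or $\vare$, so that it applies uniformly to the family $\mu_\vare$; this is true because that estimate is linear in $\|F\|_\Ldeux$. Also, unlike \cref{thm:Duality}, no restriction $s<\tfrac34$ is needed, since neither \cref{prop:Norms} nor inverse estimates above order $\tfrac32$ are used.
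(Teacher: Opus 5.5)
Your proposal is correct and follows the paper's proof essentially step by step. You insert $u_\vare = (-\Delta)^{-s}\mu_\vare$, bound $\|u-u_\vare\|_{\Ldeux}$ through the $\polH^{s-\theta}$ stability and \cref{eq:NegNormErrEst}, bound $\|u_\vare - u_{h,\calY}^{\calK,\vare}\|_{\Ldeux}$ via \cref{prop:Sawyer} and \cref{eq:BlowUpL2Norm}, and conclude with $\theta<s$ and $h\le 1$; your explicit use of $\|u-u_\vare\|_{\polH^{s-\theta}(\Omega)} \le \|\mu-\mu_\vare\|_{\polH^{-s-\theta}(\Omega)}$, rather than the measure-norm bound \cref{eq:continuous_stability} that the paper cites, is a welcome precision.
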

\begin{proof}
  Define $u_\vare \coloneqq (-\Delta)^{-s} \mu_\vare \in \polH^s(\Omega)$. Since $\mu_{\vare} \in L^2(\Omega)$, we additionally have $u_\vare \in \polH^{2s}(\Omega)$. Using the stability estimate \cref{eq:continuous_stability} we deduce
  \[
    \| u - u_\vare \|_\Ldeux \lesssim \| u - u_\vare \|_{\polH^{s-\theta}(\Omega)} \lesssim \| \mu - \mu_\vare \|_{\polH^{-s-\theta}(\Omega)} \lesssim \vare^{s+\theta-1},
  \]
  where the last estimate follows from \cref{eq:NegNormErrEst}. Next, we apply the error bound from \cref{prop:Sawyer} with $F = \mu_\vare$ to conclude that
  \[
    \| u_\vare - u_{h,\calY}^{\calK,\vare} \|_\Ldeux \lesssim h^{2s} \| \mu_\vare \|_\Ldeux \lesssim h^{2s-1} \leq h^{s+\theta-1},
  \]
  where we used \cref{eq:BlowUpL2Norm}, the scaling for $\vare$, specifically $\vare \eqsim h$, the inequality $\theta<s$, and the fact that $h \leq 1$. We conclude using the triangle inequality.
\end{proof}

\begin{remark}[bound \cref{eq:error bound_first} vs.~bound \cref{eq:error bound_second}]
  Notice that for the scheme defined in \cref{eq:IdealDiscrProblem}, we proved a convergence rate of order $\calO(h^{s-\theta})$, whereas our practical scheme \cref{eq:DefOfuhYKvare} has a convergence rate of order $\calO(h^{s+\theta-1})$. 
  To compare these assume that $s \in (\tfrac12,\tfrac34)$. Using that $\theta \in (1-s,s)$ we get
  \[
    0 < s+\theta-1 < 2s -1, 
    \qquad 
    0< s-\theta < 2s-1. 
%     \qquad
%     2s - 1 \in (0, \tfrac12).
  \]
  In other words, an appropriate choice of $\theta$ gives the rate of convergence of order $\calO(h^{2s-1-\delta})$ for both schemes, where $\delta > 0$ is arbitrarily small. It is important to note, in addition, that scheme \cref{eq:DefOfuhYKvare} converges with this rate for $s \in(\tfrac12,1)$.
\end{remark}

\subsection{Suitable regularizations}

Recall that the practical scheme \cref{eq:DefOfuhYKvare} relies on the possibility of constructing suitable regularizations $\mu_{\vare}$ of $\mu$. In the particular case of $\mu = \delta_{\vertex}$ with $\vertex \in \Omega$, such constructions can be found, for instance, in \cite{MR337032,MR3429589,MR4169484}. Here, we show that regularization by convolution has all the required properties.

\begin{proposition}[convolution]
\label{pro:convolution}
  Let $\mu \in \calM(\Omega)$, and let $\mu_\vare \in C_0^\infty(\Omega)$ be given by convolution at scale $\vare>0$, \ie
  \[
    \mu_\vare = \mu \star \rho_\vare,
  \]
  where $\rho$ is a standard mollifier and, as usual, $\rho_\vare(x) = \tfrac1{\vare^2} \rho(x/\vare)$. Then, $\mu_\vare$ satisfies \cref{eq:NegNormErrEst} and \cref{eq:BlowUpL2Norm}.
\end{proposition}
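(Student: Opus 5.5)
The plan is to establish the two estimates separately: a Young-type inequality gives \cref{eq:BlowUpL2Norm}, and a duality argument that moves the mollifier onto the test function gives \cref{eq:NegNormErrEst}. Throughout, $\mu$ is extended by zero to $\Real^2$. I will assume, as is implicit in the statement $\mu_\vare \in C_0^\infty(\Omega)$, that $\vare$ is small compared with the distance from $\operatorname{supp}\mu$ to $\partial\Omega$. If that is not the case, one can restrict $\mu \star \rho_\vare$ to $\Omega$ instead; the argument below is unchanged, because the test functions vanish outside $\Omega$.

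\emph{Step 1: the $L^2$ bound.} Young's inequality for the convolution of a finite measure with an $L^2$ function gives
\[
  \| \mu_\vare \|_\Ldeux \leq \| \mu \|_{\calM(\Omega)} \| \rho_\vare \|_{L^2(\Real^2)}.
\]
A change of variables in two dimensions gives $\| \rho_\vare \|_{L^2(\Real^2)}^2 = \vare^{-4}\int |\rho(x/\vare)|^2 \diff x = \vare^{-2}\|\rho\|_{L^2(\Real^2)}^2$. Therefore $\| \mu_\vare \|_\Ldeux \lesssim \vare^{-1}\|\mu\|_{\calM(\Omega)}$, which is \cref{eq:BlowUpL2Norm}.

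\emph{Step 2: reduction of the negative norm to a uniform estimate.} Let $w \in \polH^{s+\theta}(\Omega)$ and extend it by zero to $\Real^2$. Since the mollifier is even, Fubini's theorem gives
\[
  \langle \mu - \mu_\vare, w \rangle = \langle \mu, w - \rho_\vare \star w \rangle .
\]
Consequently,
\[
  \| \mu - \mu_\vare \|_{\polH^{-s-\theta}(\Omega)} \leq \| \mu \|_{\calM(\Omega)} \sup_{w} \frac{\| w - \rho_\vare\star w\|_{L^\infty(\Real^2)}}{\|w\|_{\polH^{s+\theta}(\Omega)}}.
\]
It therefore suffices to prove
\[
  \| w - \rho_\vare\star w\|_{L^\infty(\Real^2)} \lesssim \vare^{\sigma}\|w\|_{\polH^{s+\theta}(\Omega)},
  \qquad \sigma \coloneqq s+\theta-1 \in (0,1).
\]

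\emph{Step 3: Hölder regularity (the main obstacle).} By \cref{pro:characterization}, $\polH^{s+\theta}(\Omega) = H_0^1(\Omega)\cap H^{s+\theta}(\Omega)$. In two dimensions the Sobolev embedding gives $H^{1+\sigma}(\Omega)\hookrightarrow C^{0,\sigma}(\bar\Omega)$. This can be obtained as in \cref{rk:role_s_theta}: first $H^{1+\sigma}(\Omega) \hookrightarrow W^{1,p^\star}(\Omega)$ with $p^\star = 2/(1-\sigma)$, and then Morrey's inequality yields Hölder exponent $1-2/p^\star = \sigma$. The delicate point is that the zero extension of $w$ must remain $\sigma$-Hölder on all of $\Real^2$. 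This holds because $w$ vanishes on $\partial\Omega$. For $x\in\Omega$ and $y\notin\Omega$, let $z$ be the point where the segment $[x,y]$ meets $\partial\Omega$. Then
\[
  |w(x)-w(y)| = |w(x)-w(z)| \leq [w]_\sigma|x-z|^\sigma \leq [w]_\sigma|x-y|^\sigma .
\]
Granting this, we obtain the pointwise estimate
\[
  |w(x) - (\rho_\vare\star w)(x)| \leq \int \rho_\vare(y)|w(x)-w(x-y)|\diff y \leq [w]_\sigma \vare^\sigma,
\]
since $\rho_\vare$ is a probability density supported in the ball of radius $\vare$. This gives the uniform estimate of Step 2 and hence
\[
  \| \mu - \mu_\vare \|_{\polH^{-s-\theta}(\Omega)} \lesssim \vare^{s+\theta-1}\|\mu\|_{\calM(\Omega)},
\]
which is \cref{eq:NegNormErrEst}. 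The only genuinely technical ingredient is the fractional Sobolev–Hölder embedding on the polygon. I would cite it rather than reprove it, as the paper already does in \cref{rk:role_s_theta}.
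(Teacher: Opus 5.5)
Your proof is correct and follows essentially the same route as the paper. For \cref{eq:BlowUpL2Norm} you use a Young/Minkowski bound with $\|\rho_\vare\|_{L^2}\lesssim\vare^{-1}$, and for \cref{eq:NegNormErrEst} you use Fubini to move the mollifier onto the test function and then the embedding $\polH^{s+\theta}(\Omega)\hookrightarrow C^{0,s+\theta-1}(\bar\Omega)$. Your direct bound $|w-\rho_\vare\star w|\le[w]_\sigma\vare^\sigma$, which uses the H\"older continuity of the zero extension, is slightly cleaner than the paper's mean-value-theorem step, since that step tacitly needs the same boundary argument you supply.
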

\begin{proof}
  We first note that, following \cite[Proposition 8.49]{MR1681462}, for almost every $x \in \Real^2$, the integral
  \[
    \mu_\vare(x) \coloneqq \int \rho_\vare(x-y) \diff \mu(y)
  \]
  exists and it is finite. In addition, for every $p \in [1,\infty]$,
  \[
    \mu_\vare \in L^p(\Real^2), \qquad \supp \mu_\vare \subset \bar\Omega + B_\vare.
  \]
  We now prove \cref{eq:NegNormErrEst}. Recall that, since $1<s+\theta<2s$, we have $\polH^{s+\theta}(\Omega) \hookrightarrow C^{0,\gamma}(\bar\Omega)$, where $\gamma = s+\theta-1 \in (0,\tfrac12)$; \cite[Theorem 4.12, \textbf{Part C}]{MR2424078}. Now let $w \in \polH^{s+\theta}(\Omega)$ and estimate
  \begin{align*}
    \tensor[_{-s-\theta}]{\langle \mu- \mu_\vare, w \rangle}{_{s+\theta}} &= 
      \int_\Omega w(y) \diff \mu(y) - \int_\Omega w(x) \left[ \int \rho_\vare(x-y) \diff\mu(y) \right] \diff x
      \\
    &= \int_\Omega \left[ w(y) - \int \rho_\vare(x-y) w(x) \diff x \right] \diff \mu(y).
  \end{align*}
  Next, since $w$ is continuous, an application of the mean value theorem shows that, for each fixed $y \in \Omega$ there is $z \in B_\vare(y) \cap \Omega$ for which \cite[Corollary 11.3.4]{MR2033095}
  \[
    w(z) = \int \rho_\vare(x-y) w(x) \diff x.
  \]
  We may thus continue using the fact that $w$ is H\"older continuous and estimate
  \begin{align*}
    \left| \tensor[_{-s-\theta}]{\langle \mu- \mu_\vare, w \rangle}{_{s+\theta}} \right| &\leq 
      \int_\Omega \left| w(y) - w(z) \right| \diff |\mu|(y) 
    \lesssim \vare^\gamma |w|_{C^{0,\gamma}(\bar\Omega)} |\mu|(\Omega)¨
    \\
    &\lesssim \vare^\gamma \|w\|_{\polH^{s+\theta}(\bar\Omega)} |\mu|(\Omega).
  \end{align*}
  Finally,
  \[
    \| \mu - \mu_\vare \|_{\polH^{-s-\theta}(\Omega)} = \sup_{w \in \polH^{s+\theta}(\Omega) } \frac{ \tensor[_{-s-\theta}]{\langle \mu- \mu_\vare, w \rangle}{_{s+\theta}} }{ \|w\|_{\polH^{s+\theta}(\Omega)} } \lesssim \vare^\gamma  |\mu|(\Omega).
  \]
  
  The proof of \cref{eq:BlowUpL2Norm} is simply an application of Minkowski's integral inequality
  \begin{align*}
    \| \mu_\vare \|_\Ldeux &= \left( \int_\Omega \left| \int \rho_\vare(x-y) \diff \mu(y) \right|^2 \diff x \right)^{1/2}
      \leq \int \left( \int \rho_\vare(x-y)^2 \diff x \right)^{1/2} \diff |\mu|(y) 
    \\
    &\leq |\mu|(\Omega) \left( \int \rho_\vare(z) \diff z \right)^{1/2} \| \rho \|_{L^\infty(\Real^2)}^{1/2} \vare^{-1}.
  \end{align*}
\end{proof}

\section{Numerical Illustrations}
\label{sec:NumExp}

Let us conclude our discussion by presenting some numerical illustrations. In the absence of exact solutions to illustrate the rates of convergence our illustrations are of a qualitative nature. These were produced using the \texttt{FreeFem++} package \cite{MR3043640}.

In our experiments we set $\Omega = (0,1)^2$ and $s = 0.65$. The triangulation is uniform with $h = 3.89105\times 10^{-3}$, so that $\dim V(\Triang) = 65536$. Following \cref{thm:MainEst} we set
\[
  \calY = 11.0982, \qquad \calK = 2852.
\]

\begin{figure}
  \begin{center}
    \includegraphics[scale=0.25]{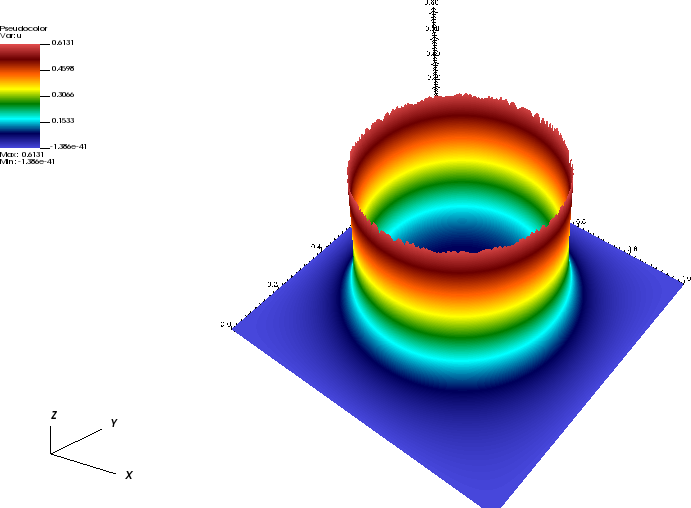}
    \includegraphics[scale=0.25]{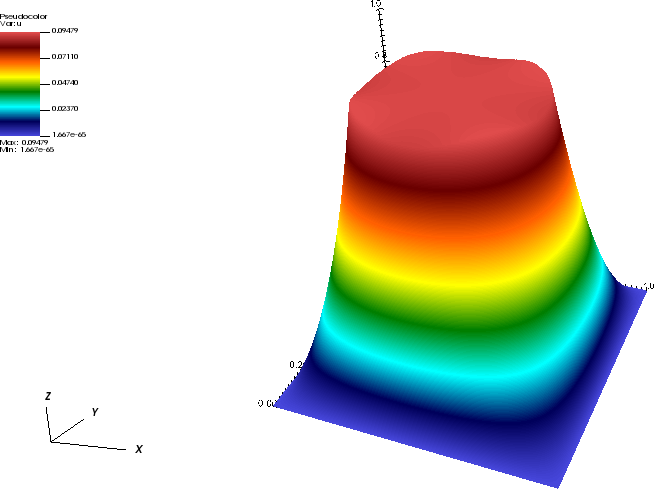}
  \end{center}
\caption{The left panel shows the solution to the fractional Laplacian ($s=0.65$) with the right hand side given by \cref{eq:Curve}. For comparison, the right panel shows the solution to the Laplacian with the same right hand side.}
\label{fig:curve}
\end{figure}

In the first experiment we let $r=0.3$ and 
\begin{equation}
\label{eq:Curve}
  \mu = \frac1{2\pi r} \delta_\Gamma, \qquad \Gamma \coloneqq \left\{ (x,y)^\top \in \Real^2 \ : \ (x-\tfrac12)^2 + (y-\tfrac12)^2 = r^2 \right\},
\end{equation}
which we regularize to
\[
  \mu_\vare = \frac1{4\pi h r} \chi_{\frakR}, \quad \frakR \coloneqq \left\{ (x,y)^\top \in \Real^2 \ : \ (r-h)^2 < (x-\tfrac12)^2 + (y-\tfrac12)^2 = (r + h)^2 \right\}.
\]
Figure~\ref{fig:curve} shows the solution to \cref{eq:TheEqn}. For comparison, this figure also shows the solution to $-\Delta w = \mu_\vare$ supplemented with homogeneous Dirichlet boundary conditions. Observe the stark difference in the support properties of the solutions.

\begin{figure}
  \begin{center}
    \includegraphics[scale=0.25]{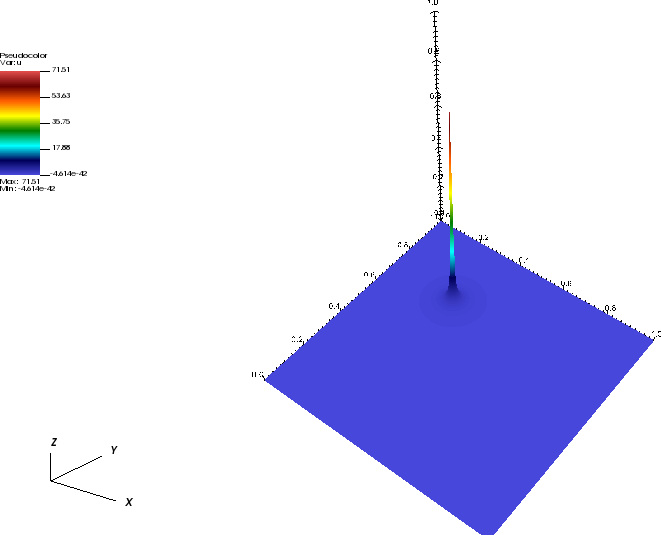}
    \includegraphics[scale=0.25]{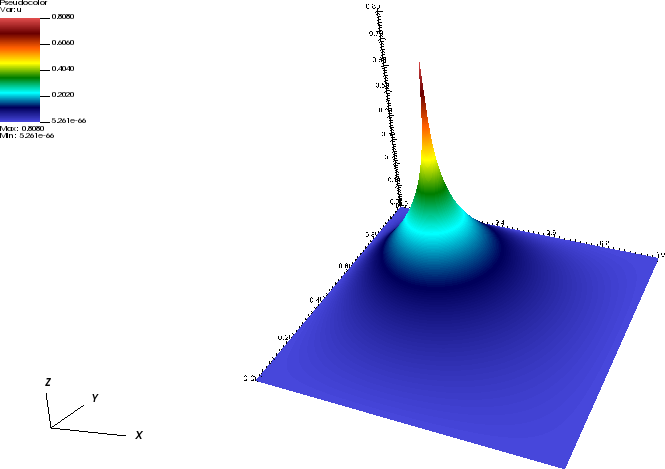}
  \end{center}
\caption{The left panel shows the solution to the fractional Laplacian ($s=0.65$) with the right hand side given by \cref{eq:Dirac}. For comparison, the right panel shows the solution to the Laplacian whit the same right hand side.}
\label{fig:Dirac}
\end{figure}

In our second experiment we consider
\begin{equation}
\label{eq:Dirac}
  \mu = \delta_{\vertex}, \qquad \vertex = (0.3,0.7),
\end{equation}
which we regularize to
\[
  \mu_\vare = \frac1{\pi h^2}\chi_\frakB, \qquad \frakB \coloneqq \left\{ (x,y)^\top \in \Real^2 \ : \ (x-\vertex_1)^2 + (y-\vertex_2)^2 < h^2 \right\}.
\]
The results of our computations are shown in Figure~\ref{fig:Dirac}. Once again, for comparison we show the solution of the Dirichlet Laplacian with the same right hand side. We note the steeper singularity for the solution of \cref{eq:TheEqn}.

\section*{Acknowledgements}

The work of EO has been partially supported by ANID grant FONDECYT-1220156 and by USM through USM project 2025 PI LII 25 12.
The work of AJS has been partially supported by NSF grant DMS-2409918.

\bibliographystyle{siamplain}
\bibliography{biblio}

@article {MR1173747,
    AUTHOR = {Oswald, P.},
     TITLE = {On the boundedness of the mapping {$f\to|f|$} in {B}esov
              spaces},
   JOURNAL = {Comment. Math. Univ. Carolin.},
  FJOURNAL = {Commentationes Mathematicae Universitatis Carolinae},
    VOLUME = {33},
      YEAR = {1992},
    NUMBER = {1},
     PAGES = {57--66},
      ISSN = {0010-2628,1213-7243},
   MRCLASS = {46E35 (41A15 47H30)},
  MRNUMBER = {1173747},
}

@book {MR2583281,
    AUTHOR = {Tr\"oltzsch, Fredi},
     TITLE = {Optimal control of partial differential equations},
    SERIES = {Graduate Studies in Mathematics},
    VOLUME = {112},
 PUBLISHER = {American Mathematical Society, Providence, RI},
      YEAR = {2010},
     PAGES = {xvi+399},
      ISBN = {978-0-8218-4904-0},
   MRCLASS = {49-02 (35J61 35K59 49J20 49K20 93C20)},
  MRNUMBER = {2583281},
MRREVIEWER = {Irena\ Lasiecka},
       DOI = {10.1090/gsm/112},
       URL = {https://doi.org/10.1090/gsm/112},
}

@book {MR2033095,
    AUTHOR = {Zorich, Vladimir A.},
     TITLE = {Mathematical analysis. {II}},
    SERIES = {Universitext},
   EDITION = {{R}ussian},
 PUBLISHER = {Springer-Verlag, Berlin},
      YEAR = {2004},
     PAGES = {xvi+681},
      ISBN = {3-540-40633-6},
   MRCLASS = {00A05 (26-01 58-01)},
  MRNUMBER = {2033095},
}

@book {MR1681462,
    AUTHOR = {Folland, Gerald B.},
     TITLE = {Real analysis},
    SERIES = {Pure and Applied Mathematics (New York)},
   EDITION = {Second},
      NOTE = {Modern techniques and their applications,
              A Wiley-Interscience Publication},
 PUBLISHER = {John Wiley \& Sons, Inc., New York},
      YEAR = {1999},
     PAGES = {xvi+386},
      ISBN = {0-471-31716-0},
   MRCLASS = {00A05 (26-01 28-01 46-01)},
  MRNUMBER = {1681462},
}

@article {MR2461254,
    AUTHOR = {Guermond, Jean-Luc and Pasciak, Joseph E.},
     TITLE = {Stability of discrete {S}tokes operators in fractional
              {S}obolev spaces},
   JOURNAL = {J. Math. Fluid Mech.},
  FJOURNAL = {Journal of Mathematical Fluid Mechanics},
    VOLUME = {10},
      YEAR = {2008},
    NUMBER = {4},
     PAGES = {588--610},
      ISSN = {1422-6928,1422-6952},
   MRCLASS = {35Q30 (35B35 65N12 65N30 76M10)},
  MRNUMBER = {2461254},
MRREVIEWER = {Jordi\ Blasco},
       DOI = {10.1007/s00021-007-0244-z},
}

@book {MR2249024,
    AUTHOR = {Thom\'ee, Vidar},
     TITLE = {Galerkin finite element methods for parabolic problems},
    SERIES = {Springer Series in Computational Mathematics},
    VOLUME = {25},
   EDITION = {Second},
 PUBLISHER = {Springer-Verlag, Berlin},
      YEAR = {2006},
     PAGES = {xii+370},
      ISBN = {978-3-540-33121-6; 3-540-33121-2},
   MRCLASS = {65-02 (65M15 65M60)},
  MRNUMBER = {2249024},
}

@article {MR3356020,
    AUTHOR = {Bonito, Andrea and Pasciak, Joseph E.},
     TITLE = {Numerical approximation of fractional powers of elliptic
              operators},
   JOURNAL = {Math. Comp.},
  FJOURNAL = {Mathematics of Computation},
    VOLUME = {84},
      YEAR = {2015},
    NUMBER = {295},
     PAGES = {2083--2110},
      ISSN = {0025-5718,1088-6842},
   MRCLASS = {65N30 (65R20)},
  MRNUMBER = {3356020},
MRREVIEWER = {Igor\ Bock},
       DOI = {10.1090/S0025-5718-2015-02937-8},
}

@article {MR3489634,
    AUTHOR = {Caffarelli, Luis A. and Stinga, Pablo Ra\'ul},
     TITLE = {Fractional elliptic equations, {C}accioppoli estimates and
              regularity},
   JOURNAL = {Ann. Inst. H. Poincar\'e{} C Anal. Non Lin\'eaire},
  FJOURNAL = {Annales de l'Institut Henri Poincar\'e{} C. Analyse Non
              Lin\'eaire},
    VOLUME = {33},
      YEAR = {2016},
    NUMBER = {3},
     PAGES = {767--807},
      ISSN = {0294-1449,1873-1430},
   MRCLASS = {35R11 (35B45 35B65 46E35)},
  MRNUMBER = {3489634},
MRREVIEWER = {Mark\ Allen},
       DOI = {10.1016/j.anihpc.2015.01.004},
}

@book {MR3753604,
    AUTHOR = {Lunardi, Alessandra},
     TITLE = {Interpolation theory},
    SERIES = {Appunti. Scuola Normale Superiore di Pisa (Nuova Serie)},
    VOLUME = {16},
   EDITION = {Third},
 PUBLISHER = {Edizioni della Normale, Pisa},
      YEAR = {2018},
     PAGES = {xiv+199},
      ISBN = {978-88-7642-639-1; 978-88-7642-638-4},
   MRCLASS = {46M35 (46-02 46B70 47D06 47F05)},
  MRNUMBER = {3753604},
       DOI = {10.1007/978-88-7642-638-4},
}

@article {MR216336,
    AUTHOR = {Fujiwara, Daisuke},
     TITLE = {Concrete characterization of the domains of fractional powers
              of some elliptic differential operators of the second order},
   JOURNAL = {Proc. Japan Acad.},
  FJOURNAL = {Proceedings of the Japan Academy},
    VOLUME = {43},
      YEAR = {1967},
     PAGES = {82--86},
      ISSN = {0021-4280},
   MRCLASS = {47.65},
  MRNUMBER = {216336},
MRREVIEWER = {K.\ Gustafson},
       URL = {http://projecteuclid.org/euclid.pja/1195521686},
}

@incollection{NV,
  author = {Nochetto, R. H. and Veeser, A.},
  title = {Primer of Adaptive Finite Element Methods},
  booktitle ={Multiscale and Adaptivity: Modeling, Numerics and
               Applications, CIME Lectures},
  publisher = {Springer},
  year = {2011},
}

@book {MR2424078,
    AUTHOR = {Adams, Robert A. and Fournier, John J. F.},
     TITLE = {Sobolev spaces},
    SERIES = {Pure and Applied Mathematics (Amsterdam)},
    VOLUME = {140},
   EDITION = {Second},
 PUBLISHER = {Elsevier/Academic Press, Amsterdam},
      YEAR = {2003},
     PAGES = {xiv+305},
      ISBN = {0-12-044143-8},
   MRCLASS = {46E35 (46-01 46-02 46B70 46Exx)},
  MRNUMBER = {2424078},
}

@article {MR2944369,
    AUTHOR = {Di Nezza, Eleonora and Palatucci, Giampiero and Valdinoci,
              Enrico},
     TITLE = {Hitchhiker's guide to the fractional {S}obolev spaces},
   JOURNAL = {Bull. Sci. Math.},
  FJOURNAL = {Bulletin des Sciences Math\'ematiques},
    VOLUME = {136},
      YEAR = {2012},
    NUMBER = {5},
     PAGES = {521--573},
      ISSN = {0007-4497,1952-4773},
   MRCLASS = {46E35 (35A23 35S05 35S30)},
  MRNUMBER = {2944369},
MRREVIEWER = {Lanzhe\ Liu},
       DOI = {10.1016/j.bulsci.2011.12.004},}

@book {MR3409135,
    AUTHOR = {Evans, Lawrence C. and Gariepy, Ronald F.},
     TITLE = {Measure theory and fine properties of functions},
    SERIES = {Textbooks in Mathematics},
   EDITION = {Revised},
 PUBLISHER = {CRC Press, Boca Raton, FL},
      YEAR = {2015},
     PAGES = {xiv+299},
      ISBN = {978-1-4822-4238-6},
   MRCLASS = {28-01},
  MRNUMBER = {3409135},
}

@article {MR3343061,
    AUTHOR = {Chandler-Wilde, S. N. and Hewett, D. P. and Moiola, A.},
     TITLE = {Interpolation of {H}ilbert and {S}obolev spaces: quantitative
              estimates and counterexamples},
   JOURNAL = {Mathematika},
  FJOURNAL = {Mathematika. A Journal of Pure and Applied Mathematics},
    VOLUME = {61},
      YEAR = {2015},
    NUMBER = {2},
     PAGES = {414--443},
      ISSN = {0025-5793,2041-7942},
   MRCLASS = {46B70 (46E35)},
  MRNUMBER = {3343061},
MRREVIEWER = {Oscar\ Dom\'inguez},
       DOI = {10.1112/S0025579314000278},
}

@book {MR2328004,
    AUTHOR = {Tartar, Luc},
     TITLE = {An introduction to {S}obolev spaces and interpolation spaces},
    SERIES = {Lecture Notes of the Unione Matematica Italiana},
    VOLUME = {3},
 PUBLISHER = {Springer, Berlin; UMI, Bologna},
      YEAR = {2007},
     PAGES = {xxvi+218},
      ISBN = {978-3-540-71482-8; 3-540-71482-0},
   MRCLASS = {46E35 (41A05 46B70 46M35)},
  MRNUMBER = {2328004},
MRREVIEWER = {Joan\ L.\ Cerd\`a},
}

@book {MR1742312,
    AUTHOR = {McLean, William},
     TITLE = {Strongly elliptic systems and boundary integral equations},
 PUBLISHER = {Cambridge University Press, Cambridge},
      YEAR = {2000},
     PAGES = {xiv+357},
      ISBN = {0-521-66332-6; 0-521-66375-X},
   MRCLASS = {35J45 (47F05 47G10 47N20 65N38)},
  MRNUMBER = {1742312},
MRREVIEWER = {Dorina\ I.\ Mitrea},
}

@book {MR350177,
    AUTHOR = {Lions, J.-L. and Magenes, E.},
     TITLE = {Non-homogeneous boundary value problems and applications.
              {V}ol. {I}},
    SERIES = {Die Grundlehren der mathematischen Wissenschaften},
    VOLUME = {Band 181},
      NOTE = {Translated from the French by P. Kenneth},
 PUBLISHER = {Springer-Verlag, New York-Heidelberg},
      YEAR = {1972},
     PAGES = {xvi+357},
   MRCLASS = {35JXX (35KXX 35LXX 46E35)},
  MRNUMBER = {350177},
}

@article {MR2754080,
    AUTHOR = {Stinga, Pablo Ra\'ul and Torrea, Jos\'e{} Luis},
     TITLE = {Extension problem and {H}arnack's inequality for some
              fractional operators},
   JOURNAL = {Comm. Partial Differential Equations},
  FJOURNAL = {Communications in Partial Differential Equations},
    VOLUME = {35},
      YEAR = {2010},
    NUMBER = {11},
     PAGES = {2092--2122},
      ISSN = {0360-5302,1532-4133},
   MRCLASS = {35R11 (35B45 35B50 35B51 35B65)},
  MRNUMBER = {2754080},
MRREVIEWER = {Nasser-eddine\ Tatar},
       DOI = {10.1080/03605301003735680},
}

@article {MR2825595,
    AUTHOR = {Capella, Antonio and D\'avila, Juan and Dupaigne, Louis and
              Sire, Yannick},
     TITLE = {Regularity of radial extremal solutions for some non-local
              semilinear equations},
   JOURNAL = {Comm. Partial Differential Equations},
  FJOURNAL = {Communications in Partial Differential Equations},
    VOLUME = {36},
      YEAR = {2011},
    NUMBER = {8},
     PAGES = {1353--1384},
      ISSN = {0360-5302,1532-4133},
   MRCLASS = {35R11 (35B65 35D30 35J25 35J91 35S10)},
  MRNUMBER = {2825595},
MRREVIEWER = {William\ Margulies},
       DOI = {10.1080/03605302.2011.562954},
}

@article {MR2646117,
    AUTHOR = {Cabr\'e, Xavier and Tan, Jinggang},
     TITLE = {Positive solutions of nonlinear problems involving the square
              root of the {L}aplacian},
   JOURNAL = {Adv. Math.},
  FJOURNAL = {Advances in Mathematics},
    VOLUME = {224},
      YEAR = {2010},
    NUMBER = {5},
     PAGES = {2052--2093},
      ISSN = {0001-8708,1090-2082},
   MRCLASS = {35J10 (35S05)},
  MRNUMBER = {2646117},
MRREVIEWER = {B.\ Kellogg},
       DOI = {10.1016/j.aim.2010.01.025},
}

@article {MR3989717,
    AUTHOR = {Banjai, Lehel and Melenk, Jens M. and Nochetto, Ricardo H. and
              Ot\'arola, Enrique and Salgado, Abner J. and Schwab,
              Christoph},
     TITLE = {Tensor {FEM} for spectral fractional diffusion},
   JOURNAL = {Found. Comput. Math.},
  FJOURNAL = {Foundations of Computational Mathematics. The Journal of the
              Society for the Foundations of Computational Mathematics},
    VOLUME = {19},
      YEAR = {2019},
    NUMBER = {4},
     PAGES = {901--962},
      ISSN = {1615-3375,1615-3383},
   MRCLASS = {65N30 (26A33 35R11 65N12)},
  MRNUMBER = {3989717},
       DOI = {10.1007/s10208-018-9402-3},
}

@article {MR4505157,
    AUTHOR = {Byun, Sun-Sig and Song, Kyeong},
     TITLE = {Mixed local and nonlocal equations with measure data},
   JOURNAL = {Calc. Var. Partial Differential Equations},
  FJOURNAL = {Calculus of Variations and Partial Differential Equations},
    VOLUME = {62},
      YEAR = {2023},
    NUMBER = {1},
     PAGES = {Paper No. 14, 35},
      ISSN = {0944-2669,1432-0835},
   MRCLASS = {35B65 (31C45 35A01 35R06)},
  MRNUMBER = {4505157},
       DOI = {10.1007/s00526-022-02349-7},
}

@article {MR4510212,
    AUTHOR = {Antil, Harbir and Brown, Thomas S. and Verma, Deepanshu and
              Warma, Mahamadi},
     TITLE = {Optimal control of fractional {PDE}s with state and control
              constraints},
   JOURNAL = {Pure Appl. Funct. Anal.},
  FJOURNAL = {Pure and Applied Functional Analysis},
    VOLUME = {7},
      YEAR = {2022},
    NUMBER = {5},
     PAGES = {1533--1560},
      ISSN = {2189-3756,2189-3764},
   MRCLASS = {49J20 (35Q49 49K20 49M41 65R20)},
  MRNUMBER = {4510212},
}

@article {MR4026184,
    AUTHOR = {G\'omez-Castro, David and V\'azquez, Juan Luis},
     TITLE = {The fractional {S}chr\"odinger equation with singular
              potential and measure data},
   JOURNAL = {Discrete Contin. Dyn. Syst.},
  FJOURNAL = {Discrete and Continuous Dynamical Systems. Series A},
    VOLUME = {39},
      YEAR = {2019},
    NUMBER = {12},
     PAGES = {7113--7139},
      ISSN = {1078-0947,1553-5231},
   MRCLASS = {35R11 (35D30 35J75)},
  MRNUMBER = {4026184},
MRREVIEWER = {Vincenzo\ Ambrosio},
       DOI = {10.3934/dcds.2019298},
}

@article {MR3339179,
    AUTHOR = {Kuusi, Tuomo and Mingione, Giuseppe and Sire, Yannick},
     TITLE = {Nonlocal equations with measure data},
   JOURNAL = {Comm. Math. Phys.},
  FJOURNAL = {Communications in Mathematical Physics},
    VOLUME = {337},
      YEAR = {2015},
    NUMBER = {3},
     PAGES = {1317--1368},
      ISSN = {0010-3616,1432-0916},
   MRCLASS = {35R11 (31B35 35A01 35R06 35R09)},
  MRNUMBER = {3339179},
MRREVIEWER = {Paolo\ Baroni},
       DOI = {10.1007/s00220-015-2356-2},
}

@article {MR3217045,
    AUTHOR = {Chen, Huyuan and V\'eron, Laurent},
     TITLE = {Semilinear fractional elliptic equations involving measures},
   JOURNAL = {J. Differential Equations},
  FJOURNAL = {Journal of Differential Equations},
    VOLUME = {257},
      YEAR = {2014},
    NUMBER = {5},
     PAGES = {1457--1486},
      ISSN = {0022-0396,1090-2732},
   MRCLASS = {35R11 (35J08 35J61 35R06)},
  MRNUMBER = {3217045},
MRREVIEWER = {Zhuoran\ Du},
       DOI = {10.1016/j.jde.2014.05.012},
}

@article {MR2779579,
    AUTHOR = {Karlsen, Kenneth H. and Petitta, Francesco and Ulusoy,
              Suleyman},
     TITLE = {A duality approach to the fractional {L}aplacian with measure
              data},
   JOURNAL = {Publ. Mat.},
  FJOURNAL = {Publicacions Matem\`atiques},
    VOLUME = {55},
      YEAR = {2011},
    NUMBER = {1},
     PAGES = {151--161},
      ISSN = {0214-1493,2014-4350},
   MRCLASS = {35R11 (35A01 35A02 35B65 35J15 35R06)},
  MRNUMBER = {2779579},
MRREVIEWER = {M.\ Pilar\ Velasco},
       DOI = {10.5565/PUBLMAT\_55111\_07},
}

@article {MR3893441,
    AUTHOR = {Bonito, Andrea and Borthagaray, Juan Pablo and Nochetto,
              Ricardo H. and Ot\'arola, Enrique and Salgado, Abner J.},
     TITLE = {Numerical methods for fractional diffusion},
   JOURNAL = {Comput. Vis. Sci.},
  FJOURNAL = {Computing and Visualization in Science},
    VOLUME = {19},
      YEAR = {2018},
    NUMBER = {5-6},
     PAGES = {19--46},
      ISSN = {1432-9360,1433-0369},
   MRCLASS = {65N30 (26A33 65N15)},
  MRNUMBER = {3893441},
       DOI = {10.1007/s00791-018-0289-y},
}

@article {MR3348172,
    AUTHOR = {Nochetto, Ricardo H. and Ot\'arola, Enrique and Salgado, Abner
              J.},
     TITLE = {A {PDE} approach to fractional diffusion in general domains: a
              priori error analysis},
   JOURNAL = {Found. Comput. Math.},
  FJOURNAL = {Foundations of Computational Mathematics. The Journal of the
              Society for the Foundations of Computational Mathematics},
    VOLUME = {15},
      YEAR = {2015},
    NUMBER = {3},
     PAGES = {733--791},
      ISSN = {1615-3375,1615-3383},
   MRCLASS = {65N30 (35S11 65N12)},
  MRNUMBER = {3348172},
MRREVIEWER = {Patrick\ Henning},
       DOI = {10.1007/s10208-014-9208-x},
}

@book {MR609148,
    AUTHOR = {Birman, M. {\v{S}}. and Solomjak, M. Z.},
     TITLE = {{Spektral'naya teoriya samosopryazhennykh
              operatorov v gil'bertovom prostranstve}},
 PUBLISHER = {Leningrad. Univ., Leningrad},
      YEAR = {1980},
     PAGES = {264 pp. (loose errata)},
   MRCLASS = {47-01},
  MRNUMBER = {609148},
MRREVIEWER = {Dan\ Timotin},
}

@book {MR4269305,
    AUTHOR = {Ern, Alexandre and Guermond, Jean-Luc},
     TITLE = {Finite elements {II}---{G}alerkin approximation, elliptic and
              mixed {PDE}s},
    SERIES = {Texts in Applied Mathematics},
    VOLUME = {73},
 PUBLISHER = {Springer, Cham},
      YEAR = {[2021] \copyright 2021},
     PAGES = {ix+492},
      ISBN = {978-3-030-56922-8; 978-3-030-56923-5},
   MRCLASS = {65-01},
  MRNUMBER = {4269305},
       DOI = {10.1007/978-3-030-56923-5},
}

@article {MR3118443,
    AUTHOR = {Ciarlet, Jr., P.},
     TITLE = {Analysis of the {S}cott-{Z}hang interpolation in the
              fractional order {S}obolev spaces},
   JOURNAL = {J. Numer. Math.},
  FJOURNAL = {Journal of Numerical Mathematics},
    VOLUME = {21},
      YEAR = {2013},
    NUMBER = {3},
     PAGES = {173--180},
      ISSN = {1570-2820,1569-3953},
   MRCLASS = {65N30 (65N15)},
  MRNUMBER = {3118443},
MRREVIEWER = {Francesco\ Calabr\`o},
       DOI = {10.1515/jnum-2013-0007},
}

@article {MR4238777,
    AUTHOR = {Faustmann, Markus and Melenk, Jens Markus and Parvizi, Maryam},
     TITLE = {On the stability of {S}cott-{Z}hang type operators and
              application to multilevel preconditioning in fractional
              diffusion},
   JOURNAL = {ESAIM Math. Model. Numer. Anal.},
  FJOURNAL = {ESAIM. Mathematical Modelling and Numerical Analysis},
    VOLUME = {55},
      YEAR = {2021},
    NUMBER = {2},
     PAGES = {595--625},
      ISSN = {2822-7840,2804-7214},
   MRCLASS = {65N30 (35R11 65N22)},
  MRNUMBER = {4238777},
       DOI = {10.1051/m2an/2020079},
}

@article {MR1255054,
    AUTHOR = {Matsuki, Mihoko and Ushijima, Teruo},
     TITLE = {A note on the fractional powers of operators approximating a
              positive definite selfadjoint operator},
   JOURNAL = {J. Fac. Sci. Univ. Tokyo Sect. IA Math.},
  FJOURNAL = {Journal of the Faculty of Science. University of Tokyo.
              Section IA. Mathematics},
    VOLUME = {40},
      YEAR = {1993},
    NUMBER = {2},
     PAGES = {517--528},
      ISSN = {0040-8980},
   MRCLASS = {47B25 (47A60)},
  MRNUMBER = {1255054},
MRREVIEWER = {Alberto\ Venni},
}

@article{Sawyer,
      title={A {S}emi-{A}nalytic {D}iagonalization {FEM} for the {S}pectral {F}ractional {L}aplacian}, 
      author={Abner J. Salgado and Shane E. Sawyer},
      year={2025},
      note={(accepted) arXiv:2409.17388},
      journal ={SIAM J. Sci. Comput.},
}

@article {MR3429589,
    AUTHOR = {Hosseini, Bamdad and Nigam, Nilima and Stockie, John M.},
     TITLE = {On regularizations of the {D}irac delta distribution},
   JOURNAL = {J. Comput. Phys.},
  FJOURNAL = {Journal of Computational Physics},
    VOLUME = {305},
      YEAR = {2016},
     PAGES = {423--447},
      ISSN = {0021-9991,1090-2716},
   MRCLASS = {35A35 (35A08)},
  MRNUMBER = {3429589},
       DOI = {10.1016/j.jcp.2015.10.054},
}

@article {MR4169484,
    AUTHOR = {Heltai, Luca and Lei, Wenyu},
     TITLE = {A priori error estimates of regularized elliptic problems},
   JOURNAL = {Numer. Math.},
  FJOURNAL = {Numerische Mathematik},
    VOLUME = {146},
      YEAR = {2020},
    NUMBER = {3},
     PAGES = {571--596},
      ISSN = {0029-599X,0945-3245},
   MRCLASS = {65N30 (65N15)},
  MRNUMBER = {4169484},
MRREVIEWER = {Nicolae\ Pop},
       DOI = {10.1007/s00211-020-01152-w},
}

@article {MR337032,
    AUTHOR = {Scott, Ridgway},
     TITLE = {Finite element convergence for singular data},
   JOURNAL = {Numer. Math.},
  FJOURNAL = {Numerische Mathematik},
    VOLUME = {21},
      YEAR = {1973/74},
     PAGES = {317--327},
      ISSN = {0029-599X,0945-3245},
   MRCLASS = {65N35},
  MRNUMBER = {337032},
MRREVIEWER = {G.\ T.\ Symm},
       DOI = {10.1007/BF01436386},
}

@article{MR3043640,
  AUTHOR = {Hecht, F.},
  TITLE = {New development in {F}ree{F}em++},
  JOURNAL = {J. Numer. Math.},
  FJOURNAL = {Journal of Numerical Mathematics},
  VOLUME = {20}, YEAR = {2012},
  NUMBER = {3-4}, PAGES = {251--265},
  ISSN = {1570-2820},
  MRCLASS = {65Y15},
  MRNUMBER = {3043640},
  URL = {https://freefem.org/}
}

\end{document}